\documentclass[letterpaper, 10 pt, conference]{ieeeconf}  

\usepackage{graphicx}      
\usepackage{soul}
\usepackage{latexsym}
\usepackage{amsmath}
\usepackage{amsfonts}
\usepackage{epstopdf}
\usepackage{amssymb}
\usepackage[english]{babel}
\usepackage{bm}
\usepackage{graphicx,subfig}
\newtheorem{example}{Example}
\newtheorem{definition}{Definition}
\newtheorem{algorithm}{Algorithm}

\newtheorem{theorem}{Theorem}
\newtheorem{lemma}[theorem]{Lemma}
\newtheorem{proposition}[theorem]{Proposition}
\newtheorem{corollary}[theorem]{Corollary}
\newtheorem{remark}[theorem]{Remark }

\usepackage{color}
\usepackage{algorithm}
\usepackage[noend]{algpseudocode}
 
\usepackage{xcolor}
\definecolor{verde}{rgb}{0,0.7,0}

\usepackage[colorlinks=true,linkcolor=blue,urlcolor=blue,citecolor=blue]{hyperref}

\newcommand{\be}{\begin{equation}}
\newcommand{\ee}{\end{equation}}

\newcommand{\bea}{\begin{eqnarray}}
\newcommand{\eea}{\end{eqnarray}}
\newcommand{\bean}{\begin{eqnarray*}}
\newcommand{\eean}{\end{eqnarray*}}

\IEEEoverridecommandlockouts                              

\overrideIEEEmargins                                      


\begin{document}

\title{\LARGE \bf Algebraic and Graph-Theoretic Conditions for the Herdability  of Linear Time-Invariant Systems}

 \author{Giulia De Pasquale and Maria Elena Valcher
 \thanks{G. De Pasquale and M.E. Valcher are with
 the Dipartimento di Ingegneria dell'Informazione
 Universit\`a di Padova, 
    via Gradenigo 6B, 35131 Padova, Italy, e-mail:  \texttt{giulia.depasquale@phd.unipd.it, meme@dei.unipd.it}.
This Paper is an extended version of the paper G.
De Pasquale, M.E. Valcher, "Algebraic and Graph-Theoretic Conditions for the Herdability  of Linear Time-Invariant Systems", Proceedings of the 60th IEEE Conference on Decision and Control (CDC) 2021, Austin, Mexico, USA.
}
   } 
 \maketitle

\begin{abstract}                          
In this paper we investigate a relaxed concept of controllability, known in the literature as herdability, namely the capability of a system to be driven towards 
the (interior of the) positive orthant.   Specifically, we investigate herdability for linear time-invariant systems, both from an algebraic perspective and based on  the graph representing the systems interactions. In addition, we focus on  linear state-space models corresponding to matrix pairs $(A,B)$ in which the matrix $B$ is a selection matrix that determines the  leaders in the network, and we show that  the weights that  followers  give to the leaders  do not affect the herdability of the system. We then focus on the herdability problem for   systems with a single leader in which  interactions are symmetric and the network topology is acyclic, in which case an algorithm for the leader selection is provided. In this context, under some additional conditions on the mutual distances,
 necessary and sufficient conditions for the herdability of the overall system are given.
  \end{abstract}
\section{Introduction}
There are many applications in control systems theory in which requiring that the system is controllable, namely 
that 
the system state can be driven towards any point in the state space, is unnecessary, due to the nature of the involved application.
This is what happens, in particular, when dealing with networked systems \cite{antsaklis2007special,LiZhuDing}. This kind of systems, in fact, comes into play in many applications related to biology \cite{Jacquez}, chemistry \cite{comp_gen}, sociology \cite{sociology1}, neuroscience \cite{gupta}, etc. In all these contexts, asking whether the state can be brought to an arbitrary point of the state space may lead to unnecessarily restrictive conditions  on the system model. For example, in the study of a chemical reaction  it makes no sense to 
impose that variables representing solvent concentrations may reach
 negative values. In  these cases it becomes of interest to study   a weaker concept with respect to the one of controllability, known in the literature as herdability \cite{Ruf_Shamma,Ruf_arxiv}.
Herdability indicates the capability  of a system to be driven towards the positive orthant.
In mathematical terms,  a continuous-time linear time-invariant  system $$\dot{\bf x}(t) = A {\bf x}(t) + B {\bf u}(t),$$ with $A \in {\mathbb R}^{n \times n}, B \in {\mathbb R}^{n \times m}$,   is \textit{herdable} if, for every initial condition ${\bf x}(0)$, there exists a control input that drives all the   state variables over a positive threshold.

In this work we investigate the herdability property of linear time-invariant (LTI) systems both from an algebraic and a topological perspective. In particular, we  consider systems whose associated network has cooperative and competitive relationships and we   try to understand when herdability can be deduced by simply looking at the type of relationships (namely at their signs) rather than at their values. 

Controllability of multi-agent systems is a well-established research field.
Extensive results have been obtained for sign controllable and structurally controllable networked systems \cite{struct_controllability,ParlangeliNotarstefano,graph_controllability,sign_controllability}.  In \cite{struct_controllability} the controllability of LTI systems defined on a signed graph is discussed, by focusing on the controllability analysis of positive and negative eigenvalues of systems whose matrices share the same sign pattern. In \cite{ParlangeliNotarstefano,sign_controllability} complete controllability conditions for matrix pairs $(A,B)$ are deduced based on some graph properties.
Finally, in \cite{graph_controllability} the controllability of multi-agent systems is studied by assuming that a subset of agents represent the leaders, while the remaining ones execute local protocols. It is also shown how the symmetry of the network structure affects the controllability property of the overall system.

Herdability of networked systems, on the other hand,  is a much more recent line of research, as witnessed by the recent works \cite{Meng_herd,Ruf_Shamma,Ruf_arxiv}, \cite{She_herd,She_automatica}.
In \cite{She_herd} the herdability property of dynamic leader-follower signed networks is studied from a topological point of view, under the assumption that the leaders are endowed with exogenous control inputs, thus developing sufficient conditions for herdability based on 1-walks and 2-walks on the graph. In \cite{Ruf_arxiv}  the topology and sign distribution of  the underlying graph of an LTI system in related to its herdability property, while in \cite{Ruf_Shamma}  herdability of subsets of nodes in a graph is investigated, with a special focus on the herdability of directed out-branching rooted graphs with a single input. In the paper \cite{She_automatica}, the controllable subspace of a system is characterized based on graph partitions, and sufficient conditions for the system herdability are deduced. The concept of quotient graph is exploited in order to deduce the herdability of the original graph.

Inspired by the methods exploited in the study of the controllability of networked systems and motivated by the practical need to relax the controllability property, especially in the context of networked systems with cooperative and competitive interactions, we introduce here some sufficient conditions for the herdability of  LTI systems. We   focus, in particular, on systems with leader-follower networks and undirected acyclic graphs. We also introduce some algorithms to check   some conditions ensuring the system herdability with a special focus on tree topologies. Finally, we propose a leader selection strategy aimed at guaranteeing the system herdability.

In detail, Section \ref{s2} provides some sufficient conditions for the herdability of a generic matrix pair $(A,B)$, based on the algebraic structure of the system. In Section \ref{s3} herdability of  leader-follower networks is studied, while Section \ref{s4} focuses on systems whose  associated graph has   a single leader and tree topology. Section \ref{s5} concludes the paper.
\medskip

{\bf Notation}.
Given   $k, n\in \mathbb{Z}$, with $k <n$,   the symbol   $[k,n]$   denotes the  integer set  $\{k, k+1, \dots, n\}$.
The $(i,j)$-th entry of a matrix $A$ is denoted 
 by $[A]_{ij}$, while the $i$-th entry of a vector ${\bf v}$ by $[{\bf v}]_i$.
 The notation $M= {\rm diag} \{M_1, M_2,  \dots,  M_n\}$ indicates a block  diagonal matrix  with diagonal blocks $M_1, M_2,  \dots,  M_n$.
We let   ${\bf e}_i$ denote the $i$-th vector of the canonical basis of $\mathbb{R}^n$,  where the dimension $n$ will be   clear from the context. 
  Accordingly, $M {\bf e}_j$ denotes the $j$-th column of $M$, 
  ${\bf e}_i^\top M$ the $i$-th row of $M$, and ${\bf e}_i^\top M {\bf e}_j$ the $(i,j)$-th entry of $M$.  Every nonzero multiple of a canonical vector is called {\em monomial vector}.
The vectors ${\bf 1}_n$ and ${\bf 0}_n$ denote the $n$-dimensional vectors whose entries are all $1$ or $0$, respectively.
 Similarly, the symbol ${\bf 0}_{p \times m}$ denotes the $p\times m$ matrix with all zero entries.



   Given a vector ${\bf v} \in {\mathbb R}^n$, the set    
   $\overline{\rm ZP}({\bf v}) = \{ i \in [1,n]: [{\bf v}]_i \neq 0\}$  denotes the  
   {\em non-zero pattern} of ${\bf v}$  \cite{TACconPaolo}.  Similarly, we can define the   non-zero pattern of a matrix $A$. A nonzero vector ${\bf v}$ is said to be {\em unisigned} \cite{Ruf_arxiv} if all its nonzero entries have the same sign.
If ${\bf v}$ is a unisigned vector, then by ${\rm sign}({\bf v})$ we mean the common sign of its nonzero entries.
 In other words, ${\rm sign}({\bf v})=1$ if the nonzero entries of ${\bf v}$ are positive, while 
${\rm sign}({\bf v})=-1$ if the nonzero entries of ${\bf v}$ are negative.

 Given a matrix $A\in \mathbb{R}^{n \times m}$, the notation ${\rm Im (A)}$ denotes the image of the matrix $A$.
 A matrix  (in particular, a vector)
 $A$ is   {\em nonnegative}  (denoted by  $A \ge 0$) \cite{BookFarina}  if all its entries are nonnegative.
  $A$ is   {\em strictly positive} (denoted by  $A \gg 0$) if all its entries   are positive.
  A matrix $P\in {\mathbb R}^{n \times n}$ is a {\em permutation matrix} if its columns are a permuted version of the columns of the identity matrix $I_n$.

%
%
%

 Given a set ${\mathcal S}$, the {\em cardinality} of ${\mathcal S}$ is denoted by $|{\mathcal S}|$.
To any matrix   $A\in {\mathbb R}^{n \times n}$, we associate the {\em signed and weighted directed graph}
${\mathcal G}(A)= ({\mathcal V}, {\mathcal E}, A),$
where ${\mathcal V} =[1,n]$ is the set of nodes.
The set ${\mathcal E}\subseteq {\mathcal V}\times {\mathcal V}$ is  the set of arcs  (edges)
connecting  the nodes, while the matrix $A\in {\mathbb R}^{n \times n}$ is the adjacency matrix of the graph. There  is an arc $(j, i) \in \mathcal{E}$ from $j$ to $i$, if and only if $[A]_{ij} \ne 0$.  When so, $[A]_{ij}$ is the {\em weight} of the arc.

  A sequence of $k$ consecutive arcs 
 $(j, j_2),(j_2, j_3),$ $\dots, (j_k,i)\in {\mathcal E}$ is a {\em walk}  
 of length $k$ 
 from $j$  to 
 $i$.
A walk from $j$   to $i$ is said to be {\em positive (negative)} if the product of the weights of the edges that compose the walk is positive (negative). 
  A directed graph ${\mathcal G}(A)$ is {\em strongly connected} if for every pair of vertices $i,j\in {\mathcal V}$ there exists a walk from $i$ to $j$.
A {\em minimum walk} from $j$   to $i$ is a walk of minimum length connecting the two nodes. We define the   {\em distance} $d(j,i)$ from the node $j$ to the node $i$
as the length of the minimum walk from $j$ to $i$. The distance    $d(j,{\mathcal I})$ from the node $j$ to the set of nodes $\mathcal{I}$ is the minimum among all the distances $d(j,i),$ $i \in {\mathcal I}$. Similarly, the distance    $d({\mathcal I},j)$ from the set of nodes ${\mathcal I}$ to the vertex $j$ is   the minimum among all the distances $d(i,j),$ $i \in {\mathcal I}$.
\\ Given a node $i \in \mathcal{V}$,
we define the {\em out-neighborhood} of node $i$ as the set of nodes $j$ such that $d(i,j)=1$, namely ${\rm Out}(i) = \{j \in \mathcal{V}: (i,j)\in \mathcal{E}\}$. We define the {\em positive out-neighborhood}  of node $i$ as   the set of nodes $j$ such that $(i,j)$ is an arc of ${\mathcal G}(A)$ of positive weight, namely ${\rm Out}_+(i) = \{j \in \mathcal{V}: [A]_{ji}>0\}$. The definition of {\em negative out-neighborhood} of a node is analogous.
The out-neighborhood can be also defined for a set of nodes $\mathcal{I}\subset {\mathcal V}$ as 
 ${\rm Out}({\mathcal I}) = \{j \in \mathcal{V}\setminus {\mathcal I}: (i,j)\in \mathcal{E}, \exists\  i \in {\mathcal I} \}$. The definitions of ${\rm Out}_{+}({\mathcal I})$ and ${\rm Out}_{-}({\mathcal I})$ are  analogous.\\
 If $A$ is a symmetric matrix, namely $A=A^\top$, the graph ${\mathcal G}(A)$ is (signed, weighted and) undirected, and all previous concepts (in particular, the concepts of walk and distance) become symmetric.
\medskip

A graph ${\mathcal G}(A)$ is said 
to be {\em structurally balanced} if all    its nodes can be partitioned into two disjoint subsets ${\mathcal V}_1$ and ${\mathcal V}_2$ in a way such that $\forall i,j \in    {\mathcal V}_p$, $p \in \{1,2\}$, $[A]_{ij} \geq 0$ and $\forall i \in \mathcal{V}_p$ and $\forall j \in {\mathcal V}_q$, $p,q \in \{1,2\},$ $p \neq q$, it holds that $[A]_{ij}\leq 0$. Note that if ${\mathcal V}_1=[1,m]$, while ${\mathcal V}_2 = [m+1,n]$,   the matrix $A$ can be block partitioned as
$$A = \begin{bmatrix} A_{11} & A_{12}\cr A_{21} & A_{22}\end{bmatrix},$$
where $A_{11}\in {\mathbb R}^{m\times m}$ and $A_{22}\in {\mathbb R}^{(n-m)\times (n-m)}$ are nonnegative matrices, while $A_{12}$ and $A_{21}$ are nonpositive matrices (i.e., the opposite of nonnegative matrices).
\medskip

\section{Sufficient conditions for herdability of general pairs $(A,B)$}\label{s2}

 The concept of herdability of linear and time-invariant state space models
 described by a matrix pair $(A,B)$, with $A \in {\mathbb R}^{n \times n}$ and $B \in {\mathbb R}^{n \times m}$,
 has been defined in various ways \cite{Ruf_Shamma,Ruf_arxiv,She_herd}. 
In this paper we are   interested 
 in the behavior of all state variables, rather than in the behavior of a subset of them.   Consequently, 
 we assume the following   definition
 (which is equivalent to Definition 3 in \cite{Ruf_arxiv}).
\medskip
 
\begin{definition}  Given a (continuous-time or discrete-time)   (linear and time-invariant) state space model of dimension $n$ with $m$ inputs,
described by a pair $(A,B), \ A\in {\mathbb R}^{n\times n}$ and $B\in {\mathbb R}^{n\times m}$, 
the system (the pair) is said to be {\em (completely) herdable} if for every ${\bf x}(0)$ and every $h> 0$, there exists a time $t_f > 0$ and an input $u(t), t\in [0,t_f),$ that drives the state of the system from ${\bf x}(0)$ to ${\bf x}(t_f) \ge h {\bf 1}_n$.
\end{definition}
\smallskip

Both in the continuous-time case and in the discrete-time case, herdability reduces to a condition on the controllability matrix associated with the pair $(A,B)$.
\medskip

\begin{proposition} [Corollary 1, \cite{Ruf_arxiv}] A pair $(A,B), A\in {\mathbb R}^{n\times n}$ and $B\in {\mathbb R}^{n\times m}$, is herdable if and only if 
there exists a strictly positive vector belonging to ${\rm Im}({\mathcal R}(A,B))$, where 
\be
{\mathcal R}(A,B) := \begin{bmatrix} B & AB & A^2B & \dots & A^{n-1} B \end{bmatrix}
\label{reach_mat}
\ee
is the {\em controllability matrix} of the pair $(A,B)$.
\end{proposition}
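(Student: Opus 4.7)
The plan is to exploit the standard characterization of the reachable subspace from the origin: in both the continuous-time and the discrete-time setting (for $t\ge n$ in the latter), the set of states reachable at time $t_f$ starting from $\mathbf{x}(0)$ is the affine subspace
\[
\Phi(t_f)\mathbf{x}(0) + \mathrm{Im}(\mathcal{R}(A,B)),
\]
where $\Phi(t_f)=e^{At_f}$ in continuous time and $\Phi(t_f)=A^{t_f}$ in discrete time. Once this is in place, herdability is simply the statement that this affine subspace meets the shifted positive orthant $\{\mathbf{x}:\mathbf{x}\ge h\mathbf{1}_n\}$ for every $\mathbf{x}(0)$ and every $h>0$.

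For the necessity direction I would pick the distinguished initial condition $\mathbf{x}(0)=\mathbf{0}_n$. Herdability applied with this choice and any fixed $h>0$ yields an input steering the state to some $\mathbf{x}(t_f)\ge h\mathbf{1}_n\gg \mathbf{0}_n$; but with $\mathbf{x}(0)=\mathbf{0}_n$ the reachable set at time $t_f$ is contained in $\mathrm{Im}(\mathcal{R}(A,B))$, so $\mathbf{x}(t_f)$ itself is the required strictly positive vector in $\mathrm{Im}(\mathcal{R}(A,B))$.

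For sufficiency, let $\mathbf{v}\in\mathrm{Im}(\mathcal{R}(A,B))$ be strictly positive, and fix arbitrary $\mathbf{x}(0)\in\mathbb{R}^n$ and $h>0$. Choose any $t_f>0$ (or any integer $t_f\ge n$ in discrete time) and set $\mathbf{z}:=h\mathbf{1}_n-\Phi(t_f)\mathbf{x}(0)$. Since every entry of $\mathbf{v}$ is strictly positive, the scalar
\[
\alpha \;:=\;\max_{i\in[1,n]}\, \frac{[\mathbf{z}]_i}{[\mathbf{v}]_i}
\]
is well defined and finite, and $\alpha\mathbf{v}\ge \mathbf{z}$ componentwise (taking $\alpha=0$ if $\mathbf{z}\le\mathbf{0}_n$). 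Because $\mathrm{Im}(\mathcal{R}(A,B))$ is a linear subspace, $\alpha\mathbf{v}\in\mathrm{Im}(\mathcal{R}(A,B))$, so there exists an input steering the state from $\mathbf{0}_n$ to $\alpha\mathbf{v}$ in time $t_f$; using the same input starting from $\mathbf{x}(0)$ yields $\mathbf{x}(t_f)=\Phi(t_f)\mathbf{x}(0)+\alpha\mathbf{v}\ge h\mathbf{1}_n$, proving herdability.

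The only non-routine point is the identification of the reachable subspace with $\mathrm{Im}(\mathcal{R}(A,B))$; this is classical and follows from Cayley--Hamilton together with the rank characterization of reachability, so I would just cite it rather than redevelop it. Everything else is an elementary scaling argument made possible by the strict positivity of $\mathbf{v}$.
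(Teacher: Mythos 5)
Your proof is correct. Note that the paper itself offers no proof of this proposition: it is imported verbatim as Corollary 1 of the cited reference \cite{Ruf_arxiv}, so there is nothing internal to compare against. Your argument --- identifying the reachable set from $\mathbf{x}(0)$ at time $t_f$ with the affine subspace $\Phi(t_f)\mathbf{x}(0)+\mathrm{Im}(\mathcal{R}(A,B))$, extracting the strictly positive vector via the choice $\mathbf{x}(0)=\mathbf{0}_n$ for necessity, and scaling a strictly positive $\mathbf{v}\in\mathrm{Im}(\mathcal{R}(A,B))$ to dominate $h\mathbf{1}_n-\Phi(t_f)\mathbf{x}(0)$ for sufficiency --- is the standard route and handles both the continuous-time and discrete-time cases correctly (including your restriction to $t_f\ge n$ in discrete time, and the observation that the containment of the reachable set in $\mathrm{Im}(\mathcal{R}(A,B))$ suffices for necessity even when $t_f<n$).
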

\smallskip

 Clearly, every reachable pair $(A,B)$ is herdable, but the converse is not true. Also, if ${\mathcal R}(A,B)$ has zero rows then the problem is clearly not solvable.
So, in the following we will investigate herdability  by assuming that ${\mathcal R}(A,B)$ is devoid of zero rows and
${\rm Im} ({\mathcal R}(A,B))$ is a proper subset of ${\mathbb R}^n$. \\
In this section we present some sufficient conditions for the herdability of a generic matrix pair $(A,B)$. We  will later  focus on pairs $(A,B)$ 
that are endowed with specific structural properties.
\smallskip

\begin{lemma}\label{lemmaA}
Given a 
pair $(A,B), A\in {\mathbb R}^{n\times n}$ and $B\in {\mathbb R}^{n\times m}$, assume that ${\mathcal R} := {\mathcal R}(A,B)\in {\mathbb R}^{n \times nm}$ satisfies the following conditions:
\begin{itemize}
\item[i)]   ${\mathcal R}$ has no zero rows; 
\item[ii)] the set $J :=\{ j\in  [1,nm]: {\mathcal R} {\bf e}_j\  {\rm  is\   unisigned}\}$ is such that    $\lvert \cup_{j \in J} \overline{\rm ZP} ({\mathcal R} {\bf e}_j) \lvert\geq n-1$. 
\end{itemize}
Then the pair $(A,B)$ is herdable. 
\end{lemma}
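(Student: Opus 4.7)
The plan is to invoke the earlier Proposition, which reduces herdability to the existence of a strictly positive vector in $\mathrm{Im}(\mathcal{R})$, and then to exhibit such a vector as an explicit linear combination of the columns of $\mathcal{R}$ indexed by $J$ (plus, if needed, a small corrective term).

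First, for each $j\in J$ let $s_j := \mathrm{sign}(\mathcal{R}\mathbf{e}_j)\in\{+1,-1\}$, which is well-defined precisely because $\mathcal{R}\mathbf{e}_j$ is unisigned. Then $s_j\,\mathcal{R}\mathbf{e}_j$ is a \emph{nonnegative} vector with the same non-zero pattern as $\mathcal{R}\mathbf{e}_j$. I would set
\[
\mathbf{v} := \sum_{j \in J} s_j\, \mathcal{R}\mathbf{e}_j \in \mathrm{Im}(\mathcal{R}).
\]
Because each summand is nonnegative, the $i$-th entry of $\mathbf{v}$ is strictly positive if and only if $i$ belongs to $U := \bigcup_{j\in J}\overline{\mathrm{ZP}}(\mathcal{R}\mathbf{e}_j)$. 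If $|U|=n$ we are done immediately: $\mathbf{v} \gg 0$.

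The case $|U| = n-1$ is the one requiring an extra idea, and I expect this to be the only mildly delicate point. Let $k$ be the unique index of $[1,n]$ not belonging to $U$. By hypothesis (i) row $k$ of $\mathcal{R}$ is nonzero, so there exists $j^\star\in[1,nm]$ with $[\mathcal{R}\mathbf{e}_{j^\star}]_k\ne 0$. Necessarily $j^\star\notin J$, otherwise $k$ would belong to $U$; in particular $\mathcal{R}\mathbf{e}_{j^\star}$ need not be unisigned. Replacing $\mathcal{R}\mathbf{e}_{j^\star}$ by its opposite if necessary, I may assume $[\mathcal{R}\mathbf{e}_{j^\star}]_k>0$. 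Now consider
\[
\mathbf{w} := \mathbf{v} + \varepsilon\, \mathcal{R}\mathbf{e}_{j^\star} \in \mathrm{Im}(\mathcal{R}), \qquad \varepsilon>0.
\]
The $k$-th entry equals $\varepsilon [\mathcal{R}\mathbf{e}_{j^\star}]_k > 0$, while for every $i\neq k$ we have $i\in U$, hence $[\mathbf{v}]_i>0$, and so for $\varepsilon$ sufficiently small the perturbation $\varepsilon [\mathcal{R}\mathbf{e}_{j^\star}]_i$ cannot flip the sign. Choosing
\[
\varepsilon < \min_{i\in U\,:\,[\mathcal{R}\mathbf{e}_{j^\star}]_i<0} \frac{[\mathbf{v}]_i}{\lvert [\mathcal{R}\mathbf{e}_{j^\star}]_i\rvert}
\]
(or any positive value if no such $i$ exists) guarantees $\mathbf{w}\gg 0$.

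In either case I have produced a strictly positive vector in $\mathrm{Im}(\mathcal{R})$, so the Proposition yields herdability. The whole argument hinges on a single observation—that summing the columns in $J$ with their unisigned signs captures all coordinates in $U$ without cancellations—together with the standard perturbation trick to cover the possibly missing coordinate. No heavier machinery (e.g.\ the structure of $A$, $B$, or of the associated graph) is needed here.
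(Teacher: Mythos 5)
Your proof is correct and follows essentially the same two-case argument as the paper: sum the unisigned columns with their signs to cover $U$, and in the case $|U|=n-1$ use the nonzero-row hypothesis to find an extra column hitting the missing coordinate, then combine. The only cosmetic difference is that you scale the corrective column by a small $\varepsilon$ whereas the paper scales the unisigned columns by a large $k$ — the two are equivalent up to rescaling.
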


\begin{proof}
Let us first suppose that $\lvert \cup_{j \in J} \overline{\rm ZP} ({\mathcal R} {\bf e}_j) \lvert = n$, which means that $\forall i \in [1,n]$, there exists $j \in J$ such that the $i$-th entry of the unisigned vector ${\mathcal R} {\bf e}_j$ is nonzero. By choosing the vector ${\bf u}$ with  entries
\begin{equation}
[{\bf u}]_j = 
\begin{cases}
0, & \text{if } j \notin J;\\ 
 {\rm sign({\mathcal R} {\bf e}_j)},  & \text{if } j \in J;
\end{cases}
\end{equation}
it is immediate to see that ${\mathcal R} {\bf u}\gg  0,$ and hence the pair $(A,B)$ is herdable.

Let us assume now that   $\lvert \cup_{j \in J} \overline{\rm ZP} ({\mathcal R} {\bf e}_j) \lvert= n -1$, and set $J = \{ j_1, j_2, \dots, j_k\}$. This implies that there exists a unique index $i \in [1,n]$ such that ${\bf e}_i^\top  {\mathcal R} [{\bf e}_{j_1}| {\bf e}_{j_2}| \dots | {\bf e}_{j_k}] = {\bf 0}_k^\top$. On the other hand, by hypothesis i), there exists $h \in [1,nm], h\notin J$, such that ${\bf e}_i ^\top {\mathcal R} {\bf e}_h \neq 0$. 
Therefore, by choosing the vector ${\bf u}$ with  entries
\begin{equation}
[{\bf u}]_j = 
\begin{cases}
 {\rm sign}({\bf e}_i^\top {\mathcal R} {\bf e}_h),  & \text{if } j = h;\\ 
0, & \text{if }   j \notin J \cup \{h\};\\
 k \cdot {\rm sign}({\mathcal R} {\bf e}_j), & \text{if }  j \in J;
\end{cases}
\end{equation}
there always exists $k \in \mathbb{R}, k> 0$, sufficiently large such that ${\mathcal R} {\bf u} \gg  0$.
$\square$
\end{proof}
\medskip

We now introduce a technical lemma, whose proof is elementary and hence omitted.
\smallskip

\begin{lemma}\label{lemmaB}
Given a matrix   $\Phi \in \mathbb{R}^{n \times k}$, assume that there exist two permutation matrices $P_1\in {\mathbb R}^{n \times n}$ and $P_2\in {\mathbb R}^{k\times k}$ such that
\be
P_1 \Phi P_2 = \begin{bmatrix}
\Phi_{11} & \Phi_{12}\cr 0 & \Phi_{22}
\end{bmatrix},
\label{triangolare}
\ee
and that both ${\rm Im}(\Phi_{11})$ and ${\rm Im}(\Phi_{22})$  include a strictly positive vector.
Then there exists ${\bf u} \in {\mathbb R}^k$ such that $\Phi {\bf u} \gg 0$. 
\end{lemma}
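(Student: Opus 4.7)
The plan is to reduce the statement to its block-triangular form and then to build the required vector ${\bf u}$ block by block. The key preliminary observation is that, since $P_1$ and $P_2$ are permutation matrices (so in particular nonnegative and invertible), a vector ${\bf y}$ satisfies ${\bf y}\gg 0$ if and only if $P_1 {\bf y}\gg 0$, and the change of variable ${\bf u} = P_2 {\bf w}$ is a bijection on $\mathbb{R}^k$. Hence it suffices to exhibit ${\bf w}\in\mathbb{R}^k$ with $(P_1\Phi P_2){\bf w}\gg 0$, and then set ${\bf u}:=P_2{\bf w}$.

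Next, I would partition ${\bf w}$ conformably with the block structure in \eqref{triangolare}, writing ${\bf w} = \begin{bmatrix}{\bf w}_1^\top & {\bf w}_2^\top\end{bmatrix}^\top$ with ${\bf w}_1$ matching the column dimension of $\Phi_{11}$ and ${\bf w}_2$ matching that of $\Phi_{22}$. Then
$$(P_1\Phi P_2){\bf w} \;=\; \begin{bmatrix} \Phi_{11}{\bf w}_1 + \Phi_{12}{\bf w}_2\cr \Phi_{22}{\bf w}_2\end{bmatrix},$$
so the problem splits into two requirements: the bottom block $\Phi_{22}{\bf w}_2$ must be strictly positive, and the top block $\Phi_{11}{\bf w}_1 + \Phi_{12}{\bf w}_2$ must be strictly positive as well.

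The bottom block is handled directly by hypothesis: pick any ${\bf w}_2$ with $\Phi_{22}{\bf w}_2\gg 0$, whose existence is ensured by the assumption that ${\rm Im}(\Phi_{22})$ contains a strictly positive vector. This choice also fixes the ``offset'' vector $\Phi_{12}{\bf w}_2$ appearing in the top block. By hypothesis there also exists ${\bf v}_1$ with $\Phi_{11}{\bf v}_1\gg 0$; I would then take ${\bf w}_1 := \alpha\,{\bf v}_1$ and observe that
$$\Phi_{11}{\bf w}_1 + \Phi_{12}{\bf w}_2 \;=\; \alpha\,\Phi_{11}{\bf v}_1 + \Phi_{12}{\bf w}_2.$$
Since every entry of $\Phi_{11}{\bf v}_1$ is strictly positive while $\Phi_{12}{\bf w}_2$ is a fixed, bounded vector, choosing $\alpha>0$ larger than $\max_i |[\Phi_{12}{\bf w}_2]_i|/[\Phi_{11}{\bf v}_1]_i$ makes the sum strictly positive entrywise. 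Setting ${\bf u}=P_2{\bf w}$ then yields $\Phi{\bf u}\gg 0$.

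There is no genuine obstacle in this argument: the proof is essentially bookkeeping around the block decomposition and a scaling trick to dominate the cross-term $\Phi_{12}{\bf w}_2$. The only point that deserves explicit mention is the equivalence $\Phi{\bf u}\gg 0 \;\Longleftrightarrow\; (P_1\Phi P_2)(P_2^{-1}{\bf u})\gg 0$, which is precisely what allows the reduction to the triangular form stated in \eqref{triangolare} and which justifies why the authors describe the proof as elementary and omit it.
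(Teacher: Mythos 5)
Your proof is correct and is precisely the elementary argument the authors had in mind when they omitted the proof: reduce to the block-triangular form via the permutation equivalence $\Phi{\bf u}\gg 0 \Leftrightarrow (P_1\Phi P_2)(P_2^{-1}{\bf u})\gg 0$, fix ${\bf w}_2$ from the hypothesis on $\Phi_{22}$, and scale the $\Phi_{11}$-block solution to dominate the cross-term $\Phi_{12}{\bf w}_2$. This is the same scaling device the paper itself uses in the proof of Lemma \ref{lemmaA}, so nothing further is needed.
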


 Based on Lemma \ref{lemmaB}, we can derive the following sufficient condition for heardability that bears some similarities with
Lemma 3 in \cite{Ruf_arxiv}.
\\

\begin{lemma}\label{lemmaC}
Given a 
pair $(A,B), A\in {\mathbb R}^{n\times n}$ and $B\in {\mathbb R}^{n\times m}$, assume that ${\mathcal R} := {\mathcal R}(A,B)\in {\mathbb R}^{n \times nm}$  has no zero rows. 
Define the sets 
   \begin{eqnarray}
   J &:=&\{ j\in  [1,nm]: {\mathcal R} {\bf e}_j\  {\rm is\   unisigned}\}\\
   {\mathcal H} &:=& \cup_{j \in J} \overline{\rm ZP}({\mathcal R} {\bf e}_j),
   \end{eqnarray}
  and suppose that
$\forall h \in [1,n] \setminus {\mathcal H}$ 
there exists $j \in [1,nm]\setminus J$ such that
\begin{itemize}
\item[i)]  $[{\mathcal R}]_{hj} = {\bf e}_h^\top {\mathcal R} {\bf e}_j \neq 0$, and 
\item[ii)] $\forall k \in [1,n] \setminus {\mathcal H}$, condition $[{\mathcal R}]_{kj} = {\bf e}_k^\top {\mathcal R} {\bf e}_j \neq 0$ implies  
${\rm sign}([{\mathcal R}]_{kj})={\rm sign}([{\mathcal R}]_{hj})$,
\end{itemize}
  namely for every  index $h$  that does not belong to ${\mathcal H}$ there exists a column of ${\mathcal R}$, say ${\mathcal R} {\bf e}_j$,
where  the $h$-th entry  and all the nonzero entries  corresponding to indices that do not belong to ${\mathcal H}$ are  of the same sign.
Then the pair $(A,B)$ is herdable. 
\end{lemma}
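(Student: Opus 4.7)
The strategy is to reduce to Lemma~\ref{lemmaB} by finding a block-triangular decomposition of ${\mathcal R}$. I would let $P_1$ be the row permutation sending the indices in ${\mathcal H}$ to the top and those in $[1,n]\setminus {\mathcal H}$ to the bottom, and $P_2$ the column permutation sending the indices in $J$ to the left and those in $[1,nm]\setminus J$ to the right. By the very definition of ${\mathcal H}$, every column ${\mathcal R}{\bf e}_j$ with $j\in J$ has all its nonzero entries in rows indexed by ${\mathcal H}$; hence the lower-left block of $P_1 {\mathcal R} P_2$ vanishes automatically, producing the form \eqref{triangolare} in which $\Phi_{11}$ is the submatrix of ${\mathcal R}$ indexed by ${\mathcal H}\times J$ and $\Phi_{22}$ is the one indexed by $([1,n]\setminus {\mathcal H}) \times ([1,nm]\setminus J)$.

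Next, I would verify that both ${\rm Im}(\Phi_{11})$ and ${\rm Im}(\Phi_{22})$ contain a strictly positive vector. For $\Phi_{11}$, every column is unisigned (being the ${\mathcal H}$-restriction of a unisigned column of ${\mathcal R}$), and the union of their nonzero patterns covers ${\mathcal H}$ exactly, so the construction already used in the first case of the proof of Lemma~\ref{lemmaA}, namely scaling each column by ${\rm sign}({\mathcal R}{\bf e}_j)$ and summing, produces a strictly positive vector. For $\Phi_{22}$, hypotheses (i)-(ii) supply, for each $h \in [1,n]\setminus {\mathcal H}$, an index $j_h \in [1,nm]\setminus J$ whose column, once restricted to the rows outside ${\mathcal H}$, is unisigned and has a nonzero $h$-th entry. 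The same scaling-and-summing argument applied to $\{{\mathcal R}{\bf e}_{j_h}\}_{h\in [1,n]\setminus {\mathcal H}}$ (restricted to rows outside ${\mathcal H}$) yields a strictly positive vector in ${\rm Im}(\Phi_{22})$.

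Once Lemma~\ref{lemmaB} is invoked, we obtain some $\tilde{{\bf u}}$ with $P_1 {\mathcal R} P_2 \tilde{{\bf u}} \gg 0$; setting ${\bf u} = P_2 \tilde{{\bf u}}$ produces ${\mathcal R}{\bf u} \gg 0$, from which herdability follows by the preceding proposition. The only delicate step is condition~(ii), whose role is subtler than condition~(i): a column ${\mathcal R}{\bf e}_{j_h}$ with $j_h \notin J$ is by construction \emph{not} unisigned on all of $[1,n]$, but (ii) states exactly that once the rows indexed by ${\mathcal H}$ are discarded, what remains is unisigned. This is precisely what is needed in order for the same combination trick to apply to $\Phi_{22}$, so that the sign mismatches confined to the $\Phi_{12}$ block do not interfere with the construction of a strictly positive vector in ${\rm Im}(\Phi_{22})$.
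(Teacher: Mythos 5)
Your proof is correct and follows essentially the same route as the paper's: the same row/column permutation splitting ${\mathcal H}$ from its complement and $J$ from its complement, the same observation that the lower-left block vanishes by the definition of ${\mathcal H}$, and the same appeal to Lemma~\ref{lemmaB} after exhibiting strictly positive vectors in the images of the two diagonal blocks. You are in fact slightly more explicit than the paper in verifying that the nonzero patterns of the selected columns cover all rows of each block, which is the detail the paper compresses into ``and therefore ${\rm Im}({\mathcal R}_{22})$ includes a strictly positive vector.''
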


\begin{proof} Under the lemma assumptions there exists a set of indices $T\subseteq [1,nm]\setminus J$ such that\\
a) $\left(\cup_{j \in J} \overline{\rm ZP}({\mathcal R} {\bf e}_j)\right) \cup \left(\cup_{j \in T} \overline{\rm ZP}({\mathcal R} {\bf e}_j)\right) = [1,n]$;\\
b)  if we denote by     $S\in {\mathbb R}^{(n-|{\mathcal H}|)\times n}$ the (selection) matrix whose rows are the $n$-dimensional canonical vectors   indexed in 
$[1,n] \setminus {\mathcal H}$, then 
$S {\mathcal R} {\bf e}_j$ is unisigned for every $j\in T$.\\
This implies that there exist two permutation matrices $P_1\in{\mathbb R}^{n\times n}$ and $P_2\in {\mathbb R}^{nm\times nm}$ such that
$$P_1{\mathcal R} P_2 = \begin{bmatrix}
{\mathcal R}_{11} & {\mathcal R}_{12}\cr 0 & {\mathcal R}_{22}
\end{bmatrix},$$
where   ${\mathcal R}_{11}$ has  all unisigned columns, while ${\mathcal R}_{22}$ has a subset of its columns that are unisigned     and therefore
${\rm Im}({\mathcal R}_{22})$  includes a strictly positive vector.
So, the result follows from Lemma \ref{lemmaB}.
$\square$
\end{proof}
\medskip

The idea behind Lemma \ref{lemmaB} and Lemma  \ref{lemmaC} can be recursively iterated, thus leading to an algorithm that checks a sufficient condition for   herdability.
The algorithm   receives as input the controllability matrix  and returns, if the sufficient condition is verified,
a confirmation that the pair $(A,B)$ is herdable. In detail, it
  proceeds as follows: 
  at each   step the algorithm detects a column vector that
  is unisigned, then sets to zero all the rows of ${\mathcal R}$ that correspond to the nonzero entries     (the non-zero pattern) of such a column vector. Subsequently, the algorithm
  repeats the same step on the modified matrix  ${\mathcal R}$, until either ${\mathcal R}$
becomes the zero matrix or the matrix ${\mathcal R}$ has no unisigned columns,   thus iteratively applying the same strategy     as in Lemma \ref{lemmaB}.
In the former case the pair $(A,B)$ is   herdable, in the second case the algorithm stops. 

 Algorithm \ref{algoritmo}, below, makes use of the following notation. Given a matrix ${\mathcal R}\in {\mathbb R}^{n \times nm}$ and a set ${\mathcal I}\subseteq [1,n]$, we denote by ${\mathcal R}_{\mathcal I}$ the matrix obtained from  ${\mathcal R}$ by (leaving unchanged all rows indexed in ${\mathcal I}$ and)
                  replacing every row indexed  in $[1,n]\setminus {\mathcal I}$ with the zero row. 

\small{
\begin{algorithm}[h!]  
\caption{Greedy algorithm to check herdability}  \label{algoritmo}
\begin{algorithmic}
\State ${\mathcal R} := [B | AB | \dots | A^{n-1}B]$  \Comment{Initialization}
\State ${\mathcal I} := [1,n]$
\State ${\mathcal J} := [1, n  m]$
   \While{${\mathcal I} \neq \emptyset$} \Comment{Recursive check}
       \For{$j \in {\mathcal J}$}
           \If{ ${\mathcal R} {\bf e}_j$ is unisigned}
                  \State  ${\mathcal J} = {\mathcal J} \setminus \{j\}$
		 \State ${\mathcal I} = {\mathcal I} \setminus \overline{\rm ZP}({\mathcal R}{\bf e}_j)$
                  \State ${\mathcal R} = {\mathcal R}_{{\mathcal I}}$  
		 \If{${\mathcal I} = \emptyset$}
                 \State (A,B) is herdable	 
                 \EndIf
		 \EndIf
     \EndFor
        \If{there are no unisigned column vectors in $\mathcal{R}$}
          \State stop 
      \EndIf
    \EndWhile
\end{algorithmic}
\end{algorithm} 

\normalsize

\section{Sufficient conditions for  herdability of pairs $(A,B)$ corresponding to a directed graph ${\mathcal G}(A)$ with $m$ leaders}\label{s3}

We now
consider the case when  the columns of the matrix $B$  are $m$ linearly independent    canonical vectors. It entails no loss of generality assuming that 
\be
B = \begin{bmatrix} I_m \cr 0\end{bmatrix} \in {\mathbb R}^{n\times m},
\label{B_leaders}
\ee
since we can always    permute  the 
  entries of the state vector so that this is the case.  Accordingly, we can block-partition the matrix $A\in {\mathbb R}^{n\times n}$ as follows:
\be
A= \begin{bmatrix} A_{11} & A_{12}\cr A_{21} & A_{22}\end{bmatrix},
\label{A_leaders}
\ee
where $A_{11}\in {\mathbb R}^{m\times m}$.
We want to investigate the herdability of the pairs $(A,B)$, where $A$ and $B$ are described as in \eqref{A_leaders} and  \eqref{B_leaders}, respectively.\\

One of the advantages of this set-up, that has already been    considered in \cite{She_herd,She_automatica,Meng_herd}, is that it allows to investigate the herdability of the pair $(A,B)$ by resorting to the
signed and weighted directed graph ${\mathcal G}(A)$    whose nodes are partitioned into leaders and followers, depending on whether the state variable associated to the node is endowed with an external and independent  control input (leader) or not (follower). Specifically we introduce  the following:
\smallskip

{\bf Assumption 1}: We assume that in the signed and weighted directed graph ${\mathcal G}(A)$
the first $m$ vertices, associated with the $m$ canonical vectors in $B$, represent the set
${\mathcal L}=[1,m]$   of leaders and 
the remaining vertices  are the set of followers, i.e., ${\mathcal F} =[m+1,n]$.
We let ${\mathcal F}_k$ be the set of followers whose   distance from the leaders is $k$,  $k \in [1,K]$, by this meaning 
$${\mathcal F}_k :=\{ j\in {\mathcal F}:    d({\mathcal L},j) =k\}.$$
We assume ${\mathcal F}_{K} \neq \emptyset$, ${\mathcal F}_k = \emptyset$, $k >K$. This means  that $K$ is the maximum distance from the set of leaders to a follower.  It entails no loss of generality assuming that  ${\mathcal F}_1 = [m+1,m+m_1], \dots, {\mathcal F}_k = [m+m_1+ \dots + m_{k-1}+1,m+m_1+ \dots+m_k]$,  so that $|{\mathcal L}| = m$ and $|{\mathcal F}_k| = m_k$ and  $m+m_1+\dots+m_{K} = n$. 
\smallskip

Under Assumption 1,  the controllability matrix in $K+1$ steps ${\mathcal R}_{K+1} = [B|AB| A^2B| \dots |A^{K}B] $ takes the following structure 
\begin{equation}
{\mathcal R}_{K+1} = 
\begin{bmatrix}
I_m & *&*&* & *\\
0 & \Phi_1 &* &* &* \\
& 0 & \Phi_2 & *& *\\
\vdots  & \vdots & \vdots & \ddots &* \\
0 & 0 & 0 & \dots & \Phi_{K}
\end{bmatrix}
\label{reach_k}
\end{equation}
and all the matrices $\Phi_k\in {\mathbb R}^{m_k\times m}$ have no zero rows.
\medskip

In the following some sufficient conditions for the   herdability of   a pair $(A,B)$  satisfying Assumption 1 are provided. 
\smallskip

\begin{proposition}  \label{pippo} 
  Consider a pair $(A,B)$, where $A\in {\mathbb R}^{n \times n}$ and $B\in {\mathbb R}^{n\times m}$ 
are described as in \eqref{A_leaders} and \eqref{B_leaders}, respectively.
Suppose that   Assumption 1 holds and hence the controllability matrix in $K+1$ steps of the pair is described as in \eqref{reach_k}
and all the matrices $\Phi_k\in {\mathbb R}^{m_k\times m}$ have no zero rows.\\
For every $k\in [1,K]$, introduce the sets
    \begin{eqnarray}
   J_k &:=&\{ j\in  [1,m]: \Phi_k {\bf e}_j\  {\rm is\   unisigned}\} \label{wq1}\\
   {\mathcal H}_k &:=& \cup_{j \in J_k} \overline{\rm ZP}(\Phi_k {\bf e}_j). \label{wq2}
   \end{eqnarray}
If $\forall k\in [1,K]$, one has $|{\mathcal H}_k|\ge m_k-1$,
then the pair $(A,B)$ is herdable.
\end{proposition}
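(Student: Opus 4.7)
The plan is to exploit the block staircase structure of ${\mathcal R}_{K+1}$ given in \eqref{reach_k} and reduce the claim to Lemma \ref{lemmaA}, applied to each diagonal block $\Phi_k$, combined with iterated use of Lemma \ref{lemmaB}.

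First I would note that, since the maximum distance $K$ from leaders to any follower satisfies $K \le n-m \le n-1$, the matrix ${\mathcal R}_{K+1}$ consists of the first $(K+1)m$ columns of the full controllability matrix ${\mathcal R}(A,B)$. In particular ${\rm Im}({\mathcal R}_{K+1}) \subseteq {\rm Im}({\mathcal R}(A,B))$, so by the herdability characterization (Corollary 1 of \cite{Ruf_arxiv}) it suffices to exhibit a strictly positive vector in ${\rm Im}({\mathcal R}_{K+1})$.

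Next I would observe that the proof of Lemma \ref{lemmaA} never uses the fact that its input is a controllability matrix; it relies only on the two listed conditions and explicitly constructs a vector $\bfu$ such that the given matrix applied to $\bfu$ is strictly positive. Applied to $\Phi_k \in {\mathbb R}^{m_k \times m}$, which by assumption has no zero rows and satisfies $|{\mathcal H}_k| \geq m_k - 1$, matching conditions i) and ii) of Lemma \ref{lemmaA}, the same construction yields $\bfu_k \in {\mathbb R}^m$ with $\Phi_k \bfu_k \gg 0$. Hence ${\rm Im}(\Phi_k)$ contains a strictly positive vector for every $k \in [1,K]$; the same is trivially true for the top block $I_m$.

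Finally, I would show by induction on the number of staircase blocks that ${\rm Im}({\mathcal R}_{K+1})$ contains a strictly positive vector. Writing
$${\mathcal R}_{K+1} = \begin{bmatrix} I_m & * \\ 0 & \tilde{{\mathcal R}}_K \end{bmatrix},$$
where $\tilde{{\mathcal R}}_K$ is the lower staircase matrix with diagonal blocks $\Phi_1, \dots, \Phi_K$, Lemma \ref{lemmaB} reduces the claim to finding strictly positive vectors in the images of $I_m$ and of $\tilde{{\mathcal R}}_K$. The first is immediate, while the second follows by the inductive hypothesis applied to $\tilde{{\mathcal R}}_K$ (which has the same staircase form, now with top diagonal block $\Phi_1$).

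The main delicate point is the lifting of Lemma \ref{lemmaA} from controllability matrices to arbitrary matrices satisfying the same two conditions; this is a matter of inspecting its proof rather than a new argument, since the construction of $\bfu$ nowhere invokes the Cayley--Hamilton / controllability structure. Once this is granted, the rest is a routine induction via the block-triangular Lemma \ref{lemmaB}.
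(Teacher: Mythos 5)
Your proof is correct and follows essentially the same route as the paper's: apply Lemma \ref{lemmaA} to each diagonal block $\Phi_k$ to get a strictly positive vector in ${\rm Im}(\Phi_k)$, then iterate Lemma \ref{lemmaB} up the block-triangular staircase. Your explicit remarks that ${\mathcal R}_{K+1}$ is a column submatrix of ${\mathcal R}(A,B)$ and that Lemma \ref{lemmaA}'s construction applies to any matrix satisfying its two hypotheses (not just controllability matrices) are details the paper leaves implicit, but they do not change the argument.
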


\begin{proof} The result follows from   Lemma \ref{lemmaA} and Lemma \ref{lemmaB}.
  Indeed, by making use of Lemma \ref{lemmaA}, we can claim that ${\rm Im} (\Phi_k)$ includes a strictly positive vector, for every $k\in [1,K]$. But then, by recursively using Lemma \ref{lemmaB}, one can find a strictly positive vector in ${\rm Im} ({\mathcal R}_{K+1})$.
 $\square$\end{proof}
  \medskip

\begin{proposition} \label{prop} 
 Consider a pair $(A,B)$, where $A\in {\mathbb R}^{n \times n}$ and $B\in {\mathbb R}^{n\times m}$ 
are described as in \eqref{A_leaders} and \eqref{B_leaders}, respectively.
Suppose that   Assumption 1 holds and hence the controllability matrix in $K+1$ steps of the pair is described as in \eqref{reach_k}
and all the matrices $\Phi_k\in {\mathbb R}^{m_k\times m}$ have no zero rows.
For every $k\in [1,K]$, consider the sets
 $J_k$ and  ${\mathcal H}_k$
 as in \eqref{wq1} and \eqref{wq2}.
 \\
If $\forall h \in [1,m_k] \setminus {\mathcal H}_k$ 
there exists $i \in [1,m]\setminus J_k$ such that
\begin{itemize}
\item[i)]  $[\Phi_k]_{hi}= {\bf e}_h^\top \Phi_k {\bf e}_i \neq 0$, and 
\item[ii)] $\forall \ell \in [1,m_k] \setminus   {\mathcal H}_k$, condition $[\Phi_k]_{\ell i} = {\bf e}_\ell^\top \Phi_k {\bf e}_i \neq 0$ implies  
${\rm sign}([\Phi_k]_{\ell i})={\rm sign}([\Phi_k]_{hi})$,
\end{itemize}
 then the pair $(A,B)$ is herdable.
\end{proposition}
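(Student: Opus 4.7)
The plan is to mirror the structure of the proof of Proposition \ref{pippo}, but replace the appeal to Lemma \ref{lemmaA} with an appeal to Lemma \ref{lemmaC} at each block level. The block triangular form of $\mathcal{R}_{K+1}$ in \eqref{reach_k} then allows us to glue together the strictly positive vectors obtained on each block via Lemma \ref{lemmaB}.

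First, I would fix an arbitrary $k\in [1,K]$ and show that $\mathrm{Im}(\Phi_k)$ contains a strictly positive vector. To do this, I would apply Lemma \ref{lemmaC} with the matrix $\Phi_k \in \mathbb{R}^{m_k \times m}$ playing the role of $\mathcal{R}$. Note that $\Phi_k$ has no zero rows by hypothesis, so the basic assumption of Lemma \ref{lemmaC} holds. The set $J_k$ in \eqref{wq1} and the set $\mathcal{H}_k$ in \eqref{wq2} play precisely the roles of $J$ and $\mathcal{H}$ in Lemma \ref{lemmaC}, and conditions i) and ii) of Proposition \ref{prop} are exactly conditions i) and ii) of Lemma \ref{lemmaC} transcribed for $\Phi_k$. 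The proof of Lemma \ref{lemmaC} uses only the matrix-level structure of $\mathcal{R}$ (through Lemma \ref{lemmaB}) and never exploits the fact that $\mathcal{R}$ is a controllability matrix; so the same argument yields a vector ${\bf u}_k \in \mathbb{R}^m$ such that $\Phi_k {\bf u}_k \gg 0$.

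Next, I would recursively combine these block-level positive vectors. Following the argument of Proposition \ref{pippo}, the controllability matrix $\mathcal{R}_{K+1}$ in \eqref{reach_k} is block upper triangular with diagonal blocks $I_m, \Phi_1, \Phi_2, \dots, \Phi_K$. Starting from the bottom-right corner with the pair of blocks
\begin{equation*}
\begin{bmatrix} \Phi_{K-1} & * \cr 0 & \Phi_K \end{bmatrix},
\end{equation*}
one invokes Lemma \ref{lemmaB}, using that both $\mathrm{Im}(\Phi_{K-1})$ and $\mathrm{Im}(\Phi_K)$ contain strictly positive vectors, to conclude that the image of this $2\times 2$ block matrix contains a strictly positive vector. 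One then iterates upward, peeling off one block row at a time, until the top left identity block $I_m$ is absorbed (its image trivially contains strictly positive vectors), producing a vector ${\bf u}\in \mathbb{R}^{(K+1)m}$ with $\mathcal{R}_{K+1}{\bf u} \gg 0$. Since $\mathrm{Im}(\mathcal{R}_{K+1}) \subseteq \mathrm{Im}(\mathcal{R}(A,B))$, the pair $(A,B)$ is herdable by Proposition 1.

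The only nontrivial point is the first step, namely checking that Lemma \ref{lemmaC} truly applies to each $\Phi_k$ in isolation. Nothing in the statement of Lemma \ref{lemmaC} forces $\mathcal{R}$ to have the controllability-matrix form, and its proof proceeds purely by producing permutations $P_1,P_2$ that expose a block triangular structure with two diagonal blocks whose images contain strictly positive vectors, followed by an application of Lemma \ref{lemmaB}. Hence the transcription to $\Phi_k$ is immediate, and the recursive gluing via Lemma \ref{lemmaB} introduces no additional difficulty.
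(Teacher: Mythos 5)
Your proposal is correct and follows essentially the same route as the paper, whose proof is literally the one-line statement that the result ``is obtained by repeatedly applying Lemma \ref{lemmaB} and Lemma \ref{lemmaC}''; you have simply spelled out that repetition (Lemma \ref{lemmaC} at the level of each $\Phi_k$, then recursive gluing via Lemma \ref{lemmaB} through the block-triangular structure of ${\mathcal R}_{K+1}$). Your observation that Lemma \ref{lemmaC} applies to $\Phi_k$ because its proof never uses the controllability-matrix structure of ${\mathcal R}$ is exactly the (implicit) reading the authors intend.
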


\begin{proof}The proof is obtained by repeatedly applying Lemma \ref{lemmaB} and Lemma \ref{lemmaC}.
$\square$\end{proof}
\smallskip

%
 
  Proposition \ref{ablocchi}, below, provides a method for the dimensionality reduction of the herdability problem for matrix pairs $(A,B)$, with $A$ as in \eqref{A_leaders} and $B$ as in \eqref{B_leaders}. It states that,   when there is a set of leaders among the $n$ nodes of the graph ${\mathcal G}(A)$, the herdability of the system depends only on the way  followers interact and     leaders exert their influence on their followers. How followers, in turn, ``evaluate/weight" the leaders has no influence on the herdability of the system. This result will be largely exploited in the rest of the paper.
%

\begin{proposition} \label{ablocchi}
Consider a pair $(A,B)$, where $A\in {\mathbb R}^{n \times n}$ and $B\in {\mathbb R}^{n\times m}$ 
are described as in \eqref{A_leaders} and \eqref{B_leaders}, respectively.
If we denote by ${\mathcal R}(A,B)$ the controllability matrix of $(A,B)$ and by 
${\mathcal R}(A_{22},A_{21})$ the controllability matrix of $(A_{22},A_{21})$, then
for every choice of ${\bf v}_1\in {\mathbb R}^m$ 
$${\bf v} =\begin{bmatrix} {\bf v}_1\cr {\bf v}_2\end{bmatrix} \in {\rm Im} ({\mathcal R}(A, B))
\ \ \Leftrightarrow \ \
{\bf v}_2 \in {\rm Im} ({\mathcal R}(A_{22},A_{21})).$$
Therefore
the pair $(A,B)$ is herdable if and only if the pair 
$(A_{22},A_{21})$ is herdable. 
 \end{proposition}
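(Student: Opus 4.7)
The plan is to reduce the claim to a direct block-matrix computation of the controllability matrix $\mathcal{R}(A,B)$ and then exploit the freedom provided by the columns of $B$.

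First I would write $A^{k}B$ in block form. Setting $X_0 = I_m$ and $Y_0 = 0$, the recursions
\[
X_{k+1} = A_{11} X_k + A_{12} Y_k,\qquad Y_{k+1} = A_{21} X_k + A_{22} Y_k
\]
give $A^k B = \begin{bmatrix} X_k \\ Y_k \end{bmatrix}$, so
\[
\mathcal{R}(A,B) = \begin{bmatrix} I_m & X_1 & X_2 & \cdots & X_{n-1} \\ 0 & Y_1 & Y_2 & \cdots & Y_{n-1}\end{bmatrix}.
\]
Hence the bottom block of $\mathrm{Im}(\mathcal{R}(A,B))$ is exactly the column span of $[\,Y_1 \; Y_2 \; \cdots \; Y_{n-1}\,]$, and the top-left block $I_m$ guarantees that for any bottom block produced by this column span the corresponding top component can be adjusted to any $\mathbf{v}_1 \in \mathbb{R}^m$ by adding a suitable combination of the first $m$ columns of $\mathcal{R}(A,B)$. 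This immediately delivers the ``for every choice of $\mathbf{v}_1$'' phrasing once we identify the bottom block image.

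The core step is to prove by induction on $k$ that
\[
\mathrm{span}\bigl(\text{cols of } Y_1,\ldots,Y_k\bigr) = \mathrm{span}\bigl(\text{cols of } A_{21}, A_{22}A_{21},\ldots, A_{22}^{k-1}A_{21}\bigr).
\]
For the $\subseteq$ direction I would observe that each $Y_k$ is a sum of terms of the form $A_{22}^{j} A_{21} M$ for some matrix $M$ and $j\le k-1$, and the columns of $A_{22}^{j} A_{21} M$ lie in $\mathrm{Im}(A_{22}^{j} A_{21})$. For $\supseteq$ I would argue inductively: $Y_1 = A_{21}$ gives the base case, and $Y_{k+1} = A_{21} X_k + A_{22} Y_k$ together with $\mathrm{Im}(A_{21} X_k)\subseteq \mathrm{Im}(A_{21})$ and the inductive hypothesis lets me recover $A_{22}^{k} A_{21}$ from $Y_{k+1}$ modulo terms already in the previous span. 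Finally, Cayley–Hamilton on $A_{22}\in\mathbb{R}^{(n-m)\times(n-m)}$ shows the span stabilises at $k=n-m$, so extending the index up to $n-1$ produces exactly $\mathrm{Im}(\mathcal{R}(A_{22},A_{21}))$.

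Combining these two pieces yields the biconditional characterisation of $\mathrm{Im}(\mathcal{R}(A,B))$ given in the statement. For the herdability conclusion I would note that by the earlier proposition of Ruf et al., $(A,B)$ is herdable iff $\mathrm{Im}(\mathcal{R}(A,B))$ contains a strictly positive vector; since $\mathbf{v}_1$ may be chosen freely, this happens iff $\mathrm{Im}(\mathcal{R}(A_{22},A_{21}))$ contains a strictly positive vector (take $\mathbf{v}_1 \gg 0$), i.e.\ iff $(A_{22},A_{21})$ is herdable. The only delicate step is the induction's $\supseteq$ direction, where one has to carefully peel off the $A_{21} X_k$ term without dragging in powers of $A_{22}$ that exceed what the inductive hypothesis already provides; everything else is routine block bookkeeping.
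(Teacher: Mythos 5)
Your proposal is correct and follows essentially the same route as the paper: the block-triangular form of $\mathcal{R}(A,B)$ with the $I_m$ corner handling the ``for every $\mathbf{v}_1$'' claim, an induction showing that the bottom blocks $Y_k$ span the same space as $A_{21}, A_{22}A_{21},\dots$, and Cayley--Hamilton to truncate at $n-m$ terms. The only difference is cosmetic: the paper makes your ``delicate'' $\supseteq$ step precise by strengthening the induction to the identity $Y_k = \begin{bmatrix} A_{21} & A_{22}A_{21} & \cdots & A_{22}^{k-1}A_{21}\end{bmatrix}\begin{bmatrix} * & \cdots & I_m\end{bmatrix}^\top$, whose invertible block-triangular coefficient matrix gives both inclusions at once, whereas you argue directly at the level of column spans.
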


\begin{proof}
Since 
$${\mathcal R}(A,B) = \begin{bmatrix}
I_m & \Phi_{12} \cr
0 & \Phi_{22}\end{bmatrix}$$
where
$$\!\!\begin{array}{rl}
\Phi_{22} &:=
\left[\begin{matrix} A_{21} & A_{21} A_{11} + A_{22} A_{21} \end{matrix}\right.\\
&\\
&\left.\begin{matrix} A_{21}(A_{11}^2 + A_{12} A_{21}) + A_{22}  (A_{21} A_{11} + A_{22} A_{21}) & ...
\end{matrix}\right]  \\
&\\
&= \begin{bmatrix} 0 & I_{n-m} \end{bmatrix}
\begin{bmatrix} AB & A^2 B & \dots & A^{n-1}B\end{bmatrix},
\end{array}
$$
it is immediate to see that for every ${\bf v}_1\in {\mathbb R}^m$
$${\bf v} =\begin{bmatrix} {\bf v}_1\cr {\bf v}_2\end{bmatrix} \in {\rm Im} ({\mathcal R}(A, B))
\qquad \Leftrightarrow \qquad 
{\bf v}_2 \in {\rm Im} (\Phi_{22}),$$
so we are now reduced to prove that
${\rm Im} (\Phi_{22}) = {\rm Im} \left(\begin{bmatrix} 0 & I_{n-m} \end{bmatrix}
\begin{bmatrix} AB & A^2 B & \dots & A^{n-1}B\end{bmatrix}\right) = {\rm Im}\left({\mathcal R}(A_{22},A_{21})\right).$
\\
To prove this result we want to prove that for every $k\in [1, n-1]$
\be
\!\!\!\begin{bmatrix} 0\!\!  & \!\! I_{n-m} \end{bmatrix}
  A^{k}B \! = \!
\begin{bmatrix} A_{21} \!\! &\!\!  A_{22} A_{21} & \!\!\!\! \dots \!\!\!\! & A_{22}^{k-1} A_{21}
\end{bmatrix}  \begin{bmatrix} *\cr *\cr \vdots \cr I_{m}\end{bmatrix}\!,
\label{perk}
\ee
where $*$ denotes a real matrix (whose value is not relevant).\\
We proceed by induction on $k$. If $k=1$ the result is true since
$$
\begin{bmatrix} 0 & I_{n-m} \end{bmatrix}
  AB = A_{21}=
\begin{bmatrix} A_{21} 
\end{bmatrix} I_{m}.
$$
We assume now that the result is true for $k < \bar k$ and then   show that the result is true for $k=\bar k$.
Indeed, 
\begin{eqnarray*}
&&\begin{bmatrix} 0 & I_{n-m} \end{bmatrix}
  A^{\bar k}B = \begin{bmatrix} 0 & I_{n-m} \end{bmatrix} A
  A^{\bar k -1}B \\
  &=& 
  \begin{bmatrix} A_{21} & A_{22} 
\end{bmatrix} A^{\bar k -1}B\cr
&=& \begin{bmatrix} A_{21} & A_{22} 
\end{bmatrix} \begin{bmatrix} \Xi\cr 
\begin{bmatrix} 0 & I_{n-m} \end{bmatrix}
  A^{\bar k -1}B 
  \end{bmatrix}\\
  \\
  &=& A_{21} \Xi + A_{22} \begin{bmatrix} A_{21} & A_{22} A_{21} & \dots & A_{22}^{\bar k-2} A_{21}
\end{bmatrix} \!\! \begin{bmatrix} *\cr *\cr \vdots \cr I_{m}\end{bmatrix}\\
&=&  \begin{bmatrix} A_{21} & A_{22} A_{21} & \dots & A_{22}^{\bar k-1} A_{21}
\end{bmatrix} \begin{bmatrix} \Xi \cr *\cr \vdots \cr I_{m}\end{bmatrix}.
\end{eqnarray*}
From \eqref{perk}, applied for every $k\in [1,n-1]$, it follows that 
$$\begin{array}{l}
 \begin{bmatrix} 0 & I_{n-m} \end{bmatrix}
\begin{bmatrix} AB & A^2 B & \dots & A^{n-1}B\end{bmatrix} \\
\!\!= \!\! \begin{bmatrix} A_{21} & A_{22} A_{21} & \dots & A_{22}^{n-2} A_{21} \end{bmatrix} \!\!
\begin{bmatrix} I_m & * & \dots &*\cr
& I_m & \dots &*\cr 
&&\ddots & \vdots\cr
&&& I_m\end{bmatrix}
\end{array}
$$
and hence (by Cayley-Hamilton's theorem)
\begin{eqnarray*}
{\rm Im} (\begin{bmatrix} 0 & I_{n-m} \end{bmatrix}
\begin{bmatrix} AB & A^2 B & \dots & A^{n-1}B\end{bmatrix}) &=&\\ 
{\rm Im} (\begin{bmatrix} A_{21} & A_{22} A_{21} & \dots & A_{22}^{n-2} A_{21} \end{bmatrix}) 
&=&\\
 {\rm Im} ({\mathcal R}(A_{22},A_{21})).
 \end{eqnarray*}
 Consequently,
the pair $(A,B)$ is herdable if and only if the pair 
$(A_{22},A_{21})$ is herdable. $\square$\end{proof}
\medskip

 Proposition    \ref{ablocchi} allows to easily obtain   two results that are already available in the literature.
 As we will see in the next section, however, the consequences of Proposition \ref{ablocchi}    can be further exploited.
 \medskip

 \begin{corollary} [Proposition 1  \cite{Meng_herd}] \label{coro1}
Consider a pair $(A,B)$, where $A\in {\mathbb R}^{n \times n}$ and $B\in {\mathbb R}^{n\times m}$ 
are described as in \eqref{A_leaders} and \eqref{B_leaders}, respectively.
If the directed graph ${\mathcal G}(A)$ is strongly connected and structurally balanced, and the classes in which the agents split are  
${\mathcal V}_1= [1,m]$  and ${\mathcal V}_2=[m+1,n]$, then the pair $(A,B)$ is herdable. 
 \end{corollary}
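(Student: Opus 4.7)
The plan is to reduce the problem via Proposition \ref{ablocchi} and then exploit the sign pattern imposed by structural balance. Because the two classes are $\mathcal{V}_1=[1,m]$ and $\mathcal{V}_2=[m+1,n]$, structural balance forces $A_{22}\ge 0$ and $A_{21}\le 0$ (the blocks $A_{11}, A_{12}$ play no role after the reduction). By Proposition \ref{ablocchi} it is enough to show that $(A_{22},A_{21})$ is herdable, i.e.\ that $\mathrm{Im}(\mathcal{R}(A_{22},A_{21}))$ contains a strictly positive vector.

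Every power $A_{22}^k$ is entrywise nonnegative, so every block $A_{22}^k A_{21}$ is entrywise nonpositive and hence $\mathcal{R}(A_{22},A_{21})\le 0$. The natural candidate input is therefore $\bfu=-{\bf 1}$: the $(i-m)$-th entry of $\mathcal{R}(A_{22},A_{21})\bfu$ equals the negated sum of the entries in the $(i-m)$-th row of $\mathcal{R}(A_{22},A_{21})$, which is automatically $\ge 0$, and is strictly positive iff that row is nonzero.

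So the only nontrivial step is to verify that every row of $\mathcal{R}(A_{22},A_{21})$ is nonzero, and this is where strong connectedness of $\mathcal{G}(A)$ enters. For each follower $i\in\mathcal{F}$, let $d^\star=d(\mathcal{L},i)$ (finite and positive, by strong connectedness) and pick a leader $j^\star\in\mathcal{L}$ attaining this minimum. A walk of length $d^\star$ from $j^\star$ to $i$ cannot visit any other leader as an intermediate node, for otherwise the suffix of the walk starting at that leader would be a strictly shorter walk from $\mathcal{L}$ to $i$, contradicting minimality of $d^\star$. Consequently the walk takes its first arc from $j^\star$ to some follower (contributing an entry of $A_{21}$) and then uses only follower-to-follower arcs (contributing entries of $A_{22}$). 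This walk therefore contributes to the entry $[A_{22}^{d^\star-1}A_{21}]_{i-m,\,j^\star}$, and since every summand of that entry is nonpositive (because $A_{22}^{d^\star-1}\ge 0$ and $A_{21}\le 0$) no cancellations can occur, so the entry is strictly negative. Hence the $(i-m)$-th row of $\mathcal{R}(A_{22},A_{21})$ is nonzero.

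Combining the two observations, $\mathcal{R}(A_{22},A_{21})(-{\bf 1})\gg 0$, so $(A_{22},A_{21})$ is herdable by the controllability-matrix characterization of herdability recalled in the excerpt, and Proposition \ref{ablocchi} delivers the herdability of $(A,B)$. I expect the only delicate point to be the combinatorial argument that a minimum walk from $\mathcal{L}$ to a follower avoids intermediate leaders, since this is exactly what transfers strong connectedness of the full graph $\mathcal{G}(A)$ into the row-support statement needed for $\mathcal{R}(A_{22},A_{21})$; the sign-pattern part of the proof is otherwise completely formal.
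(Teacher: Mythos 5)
Your proof is correct and follows essentially the same route as the paper's: reduce to the pair $(A_{22},A_{21})$ via Proposition \ref{ablocchi}, observe that structural balance with ${\mathcal V}_1=[1,m]$ forces $A_{21}\le 0$ and $A_{22}\ge 0$ so that ${\mathcal R}(A_{22},A_{21})$ is nonpositive, and use strong connectedness to rule out zero rows, whence $-{\bf 1}$ works as input. Your minimal-walk argument for the nonzero rows is in fact more detailed than the paper's, which simply asserts that strong connectedness prevents zero rows; your observation that the fixed sign pattern excludes cancellations is exactly the point that justifies that assertion.
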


 \begin{proof}  We first note that as ${\mathcal G}(A)$ is   strongly connected then ${\mathcal R}(A,B)$ cannot have zero rows, therefore (see Proposition \ref{ablocchi}) also 
 ${\mathcal R}(A_{22},A_{21})$ has no zero rows.
 If ${\mathcal V}_1= [1,m]$, then $A_{21}$ is a nonpositive matrix, while $A_{22}$ is a nonnegative matrix   (see the end of the Notation part), therefore the controllability matrix of the pair 
 $(A_{22}, A_{21})$ has all negative columns and no zero rows. This ensures that  
 $(A_{22}, A_{21})$ is herdable.
 $\square$\end{proof}
\medskip

  \begin{remark} It is easily seen that the result of Corollary \ref{coro1} would still be true if the set of leaders would include ${\mathcal V}_1$ rather than coincide with it.\end{remark}
\medskip

\begin{corollary} [Theorem 1 in \cite{She_herd}]  \label{coro2}
Consider a pair $(A,B)$, where $A\in {\mathbb R}^{n \times n}$ and $B\in {\mathbb R}^{n\times m}$ 
are described as in \eqref{A_leaders} and \eqref{B_leaders}, respectively.
  If  every follower is reached by at least one of the leaders in a single step, namely through a walk of length $1$,
and  for each leader  
 the walks of length $1$   to  its followers have the same sign, then the pair $(A,B)$ is herdable. 
\end{corollary}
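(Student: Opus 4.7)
The plan is to reduce the problem to the smaller pair $(A_{22}, A_{21})$ via Proposition \ref{ablocchi} and then apply Lemma \ref{lemmaA} to a particularly simple submatrix of its controllability matrix, namely $A_{21}$ itself.

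First, I translate the hypotheses into algebraic conditions on $A_{21}\in {\mathbb R}^{(n-m)\times m}$. The arc from leader $i\in {\mathcal L}=[1,m]$ to follower $m+h$ has weight $[A]_{m+h,i} = [A_{21}]_{h,i}$. The condition that every follower is reached by at least one leader through a walk of length $1$ therefore says exactly that $A_{21}$ has no zero rows. The condition that, for each leader $i$, all walks of length $1$ to its followers have the same sign says exactly that the $i$-th column of $A_{21}$ is either zero or unisigned; in either case it is unisigned in the sense of the paper (if the column is zero it may be discarded from the set $J$ without loss).

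Next, by Proposition \ref{ablocchi}, the pair $(A,B)$ is herdable if and only if $(A_{22},A_{21})$ is herdable, i.e.\ if and only if ${\rm Im}({\mathcal R}(A_{22},A_{21}))$ contains a strictly positive vector, where
\[
{\mathcal R}(A_{22},A_{21}) = \bigl[\, A_{21} \ \big| \ A_{22} A_{21} \ \big| \ \cdots \ \big| \ A_{22}^{n-m-1} A_{21} \,\bigr] \in {\mathbb R}^{(n-m)\times (n-m)m}.
\]
I now verify the hypotheses of Lemma \ref{lemmaA} for this matrix. Condition i) holds because the first block $A_{21}$ already has no zero rows, so ${\mathcal R}(A_{22},A_{21})$ has no zero rows. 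For condition ii), let $J$ contain the indices of those among the first $m$ columns of ${\mathcal R}(A_{22},A_{21})$ that are nonzero (hence unisigned, by the previous paragraph). Then
\[
\bigcup_{j\in J} \overline{\rm ZP}\bigl({\mathcal R}(A_{22},A_{21}){\bf e}_j\bigr) \;=\; \bigcup_{i=1}^{m} \overline{\rm ZP}(A_{21}{\bf e}_i) \;=\; [1,n-m],
\]
where the last equality is exactly the fact that $A_{21}$ has no zero rows. Hence the cardinality equals $n-m$, which trivially satisfies the bound required by Lemma \ref{lemmaA} for the reduced system of dimension $n-m$.

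Applying Lemma \ref{lemmaA} to $(A_{22},A_{21})$ gives the herdability of the reduced pair, and Proposition \ref{ablocchi} then yields the herdability of $(A,B)$. There is no real obstacle here: the corollary is essentially a one-line consequence of the leader-follower reduction and the observation that under the stated sign hypothesis one can already build a strictly positive image vector using only the columns of $A_{21}$ (pick $[{\bf u}]_i = {\rm sign}(A_{21}{\bf e}_i)$ for $i$ such that $A_{21}{\bf e}_i\neq 0$, and $0$ otherwise). The only care needed is the bookkeeping translating the graph-theoretic statement into the sign/zero pattern of the submatrix $A_{21}$.
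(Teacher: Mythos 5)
Your proof is correct and follows essentially the same route as the paper: reduce to the pair $(A_{22},A_{21})$ via Proposition~\ref{ablocchi} and observe that the zero-row-free matrix $A_{21}$ with unisigned (or zero) columns already yields a strictly positive vector in ${\rm Im}(A_{21})\subseteq {\rm Im}({\mathcal R}(A_{22},A_{21}))$. The paper states this last step directly rather than invoking Lemma~\ref{lemmaA}, but the underlying construction is identical.
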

\medskip

\begin{proof}
By the corollary assumptions 
the matrix $A_{21}$ is devoid of zero rows 
and all its columns are either zero vectors or unisigned vectors, therefore  ${\rm Im}(A_{21})$  includes a strictly positive vector and, since  ${\rm Im}(A_{21}) \subseteq {\rm Im}({\mathcal R}(A_{22}, A_{21}))$, also ${\rm Im}({\mathcal R}(A_{22}, A_{21}))$ does. 
%
On the other hand, 
by Proposition \ref{ablocchi}, the pair $(A,B)$ is herdable if and only if the pair $(A_{22}, A_{21})$ is herdable, and this completes the proof.
$\square$\end{proof}
\medskip

%

\section{Herdability of pairs $(A,B)$ with ${\mathcal G}(A)$ an undirected tree with a single leader}\label{s4}

Let us now consider the case when $B$ is a canonical vector 
and the matrix $A$ is a symmetric real matrix whose associated undirected graph ${\mathcal G}(A)$ is acyclic, namely ${\mathcal G}(A)$ is a tree.
This corresponds to the case of a tree with a single leader and $n-1$ followers.
This case has been investigated in \cite{She_herd}, where a sufficient condition for the herdability of the pair $(A,B)$ has been provided. 
In this section we provide a  sufficient condition for herdability that is less restrictive, and in the case of trees whose followers have 
distance at most $2$ from the leader we provide necessary and sufficient conditions.
\medskip

In order to investigate the problem we 
adopt 
the following \smallskip

\noindent {\bf Assumption~2}: The graph ${\mathcal G}(A)$ is a signed, weighted, connected and acyclic undirected graph, namely a tree.
 The leader is ${\mathcal L}=\{1\}$ (and hence $B={\bf e}_1$), while the followers   split into classes, based on their   distance from the leader. The followers at distance $1$ from the leader are ${\mathcal F}_1= [2,m_1+1]$, the followers at distance $2$ from the leader are 
${\mathcal F}_2= [m_1 +2,m_1+m_2+ 1]$, and so on till the last class ${\mathcal F}_{K}= [m_1 + \dots + m_{K-1}+2,n]$, where $K$ is the maximum distance between the leader and one of its followers. 
\medskip


\begin{proposition}\label{p1} 
Consider a pair $(A,B)$, with $A\in \mathbb{R}^{n\times n}$ and $B\in {\mathbb R}^n$ satisfying the previous Assumption 2.\\
If, for every $k\in [0,K-1]$, all the edges from the vertices in ${\mathcal F}_k$ to the vertices in ${\mathcal F}_{k+1}$ have the same sign, 
then the pair $(A,B)$ is herdable.
\end{proposition}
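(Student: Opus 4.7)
The plan is to invoke Proposition~\ref{pippo} directly. Under Assumption~2, with the single leader $\mathcal{L}=\{1\}$ and $B=\mathbf{e}_1$, we have $m=1$ and the controllability matrix $\mathcal{R}_{K+1}$ inherits the block-triangular form \eqref{reach_k}, where each diagonal block $\Phi_k\in\mathbb{R}^{m_k\times 1}$ is a single column vector indexed by $\mathcal{F}_k$. The task therefore reduces to showing that every $\Phi_k$ is a strictly nonzero, unisigned vector: once that is established, we have $J_k=\{1\}$ and $\mathcal{H}_k=[1,m_k]$, so trivially $|\mathcal{H}_k|=m_k\ge m_k-1$, and the hypothesis of Proposition~\ref{pippo} is met.

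To identify the entries of $\Phi_k$, I would note that for any $j\in\mathcal{F}_k$, the $j$-th entry of $A^k B = A^k\mathbf{e}_1$ equals $[A^k]_{j,1}$, which is the sum, over all walks of length exactly $k$ from $1$ to $j$ in $\mathcal{G}(A)$, of the product of the edge weights along the walk. Here the tree structure is crucial: in any tree, the length of an arbitrary walk between two fixed nodes has the same parity as their distance and is at least that distance, so walks of length strictly equal to $d(1,j)=k$ exist only as the unique geodesic path from $1$ to $j$. Consequently
\[
[\Phi_k]_j \;=\; \prod_{e\in \pi(1,j)} w(e),
\]
where $\pi(1,j)$ denotes the unique path from $1$ to $j$ and $w(\cdot)$ its edge weights. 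In particular, this entry is nonzero since all edge weights of $\mathcal{G}(A)$ are nonzero.

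Now I would apply the sign hypothesis. The geodesic from $1$ to $j\in\mathcal{F}_k$ necessarily steps from $\mathcal{F}_{k'}$ to $\mathcal{F}_{k'+1}$ for $k'=0,1,\dots,k-1$ exactly once each (because on a tree the unique path to a node at distance $k$ strictly increases its distance from the root at every step). By assumption, all edges between $\mathcal{F}_{k'}$ and $\mathcal{F}_{k'+1}$ share a common sign $s_{k'+1}\in\{+1,-1\}$; hence $\mathrm{sign}([\Phi_k]_j)=s_1 s_2\cdots s_k$, independent of $j\in\mathcal{F}_k$. Thus $\Phi_k$ is unisigned with no zero entries, which finishes the verification and yields herdability via Proposition~\ref{pippo}.

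The only delicate point is the walk-counting argument in a tree (walks of length $k$ between two nodes at distance $k$ reduce to the unique geodesic); everything else is a direct appeal to the previously established block-structure result.
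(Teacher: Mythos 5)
Your proof is correct and takes essentially the same route as the paper: both arguments establish that, thanks to the tree structure, the diagonal blocks $\Phi_k$ of the block-triangular controllability matrix \eqref{reach_k} (with $m=1$) are nonzero unisigned column vectors, and then conclude via Proposition~\ref{pippo}. Your explicit geodesic/walk-parity justification of why each entry of $\Phi_k$ equals the product of the edge weights along the unique path from the leader is in fact slightly more detailed than the paper's, which asserts the unisignedness of these blocks directly from the sign assumption.
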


\begin{proof}
Under the previous assumption, it is easy to see that
every vertex in ${\mathcal F}_k$ is reached for the first time  by the leader in $k$ steps,  $k \in [0,K]$, and subsequently it is reached 
after $k+2h$ steps for every $h\in \{1,2,3,\dots\}$ (since each undirected edge of the graph 
can be crossed back and forth, and hence  condition $[A^kB]_{ij}\ne 0$ implies $[A^{k+2} B]_{ij}\ne 0$). Therefore the controllability matrix of the pair $(A,B)$ takes the form
\begin{equation}\label{herd_tree}
{\mathcal R} 
= \begin{bmatrix}\
1 & 0                 & *             & 0 & * & \dots \cr
0 & {\bf v}_1      & 0            & * & 0 & \dots \cr
0 & 0                 & {\bf v}_2 & 0 & * & \dots \cr
0 & 0                 & 0 & {\bf v}_3 & 0 &   \dots \cr
\vdots &    \vdots              & \vdots &   &   \vdots&   \vdots \cr
0 & 0                 & \dots  &\dots   & {\bf v}_{K}    &  \dots 
\end{bmatrix},
\end{equation}
where ${\bf v}_k\in {\mathbb R}^{m_k}, k\in [1,K],$ are, by assumption, unisigned, while $*$ denotes (nonzero) vectors/entries whose values are not relevant.
So, by making use of Proposition \ref{pippo}, we immediately deduce that there exists a strictly positive vector in the image of ${\mathcal R}$, and hence $(A,B)$ is herdable.
$\square$\end{proof}
\medskip

\begin{remark}
Theorem 3 in \cite{She_herd}  follows   as a corollary of the previous proposition, since it imposes  that all paths from the leader to the followers in 
${\mathcal V}_o := \cup_{h \in {\mathbb Z}_+} {\mathcal F}_{1+2h}$ have the same sign and, at the same time,  
all paths from the leader to the followers in 
${\mathcal V}_e := \cup_{h \in {\mathbb Z}_+} {\mathcal F}_{2+2h}$ have the same sign. This means that not only all the  edges from vertices in ${\mathcal F}_k$ to 
vertices in ${\mathcal F}_{k+1}, k\in [0,K-1]$,  (where ${\mathcal F}_0:={\mathcal L}$) have the same signs, but such signs are uniquely determined for $k\ge 1$ once we choose the signs of the edges from ${\mathcal F}_1$ to  
${\mathcal F}_2$.
\end{remark}
\smallskip

\begin{example} \label{ex1}
Consider a pair $(A,B)$, with  $A=A^\top\in {\mathbb R}^{9\times 9}$ and $B = {\bf e}_1$, and assume that  the   undirected graph ${\mathcal G}(A)$ associated with the matrix $A$ is a tree    whose structure and   edge signs are described in Figure \ref{grafoEx1}.  
\begin{figure}[h!]  
\includegraphics[scale=0.7]{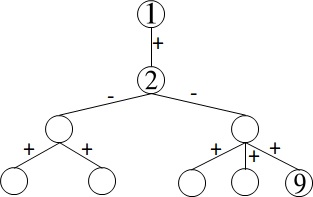}
\centering
\caption{  Tree structure of the  herdable system of Example \ref{ex1}.} \label{grafoEx1}
\end{figure}
 The nodes $i=2$ and $j=9$ both belong to $\mathcal{V}_o$, since both of them are reached from the leader (node $1$ in Fig. \ref{grafoEx1}) in an odd number of steps ($1+2h$ and $3+2h$, $h \in \{0,1,2, \dots\}$, respectively). The node $i$ is reached by the leader with positive walks, while $j$ with negative ones, so the hypotheses of Theorem 3 in \cite{She_herd} are violated. However, the controllability matrix of the pair takes the structure in \eqref{herd_tree} for $K=3$, with unisigned vectors 
 ${\bf v}_1$, ${\bf v}_2$ and ${\bf v}_3$, the first one with a positive entry, while the other two with negative entries,
 thus the pair is herdable by Proposition \ref{p1}.
\end{example}

  Given a matrix $A$ and hence a graph ${\mathcal G}(A)$ with a tree structure,
  we propose now   Algorithm \ref{algoritmo2} for the selection of a (unique) leader $i$   in order to  ensure, if possible,   that the pair $(A, {\bf e}_i)$ is herdable.
 The algorithm     searches for a single node, if it exists, for which the sufficient condition given in  Proposition 
\ref{p1} is satisfied.
For the meaning of the symbols ${\rm Out}_{+}({\mathcal F}), {\rm Out}_{-}({\mathcal F})$ etc., we refer the reader to the Notation part, at the beginning of the paper. 

%

\small{
\begin{algorithm}[h!]  
\caption{Algorithm for the selection of a single leader   to ensure  herdability of a pair $(A,B)$ when ${\mathcal G}(A)$ is a tree} \label{algoritmo2}
\begin{algorithmic}
   \For{${i} \in \mathcal{V}$} 
       \If{{\small ${\rm Out}_{+}(i) = {\rm Out}(i)  \ne \emptyset$ or ${\rm Out}_{-}(i) = {\rm Out}(i)  \ne \emptyset$}}
          \State ${\mathcal L}  :=  \{i\}$
           \State ${\mathcal F} :=  \{ j : (i,j) \in \mathcal{E} \}$
          \State ${\mathcal H} :=  {\mathcal L} \cup {\mathcal F}$ 
           \If{$|{\mathcal H}| = n$}
                \State $(A,B)$ is herdable
            \Else
             \While{{\small ${\rm Out}_{+}({\mathcal F})={\rm Out}({\mathcal F}) \ne \emptyset$ or \\
 \qquad \qquad  \qquad \qquad ${\rm Out}_{-}({\mathcal F}) =  {\rm Out}({\mathcal F})  \ne  \emptyset$}}
                     \State ${\mathcal F} = {\mathcal F}\cup{\rm Out}({\mathcal F})$
                     \State ${\mathcal H} = {\mathcal H} \cup {\mathcal F}$
                      \If{$|{\mathcal H}| = n$}
                          \State $(A,B)$ is herdable
			\EndIf
                   \EndWhile

		\EndIf
		\EndIf
           \EndFor
\end{algorithmic}
\end{algorithm}   
}
\normalsize
\smallskip

 Propositions \ref{p2} and \ref{p3}, below, provide complete characterizations of herdability for trees in which followers have all distance $1$ from the leader or distance at most $2$, respectively.

\begin{proposition}\label{p2} 
Consider a pair $(A,B)$, with $A\in \mathbb{R}^{n\times n}$ and $B\in {\mathbb R}^n$ satisfying   Assumption 2, and suppose that all the followers have distance one from the leader.\\
Then the pair $(A,B)$ is herdable if and only if all the edges have the same sign.
\end{proposition}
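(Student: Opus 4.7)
The plan is to apply Proposition \ref{ablocchi} to reduce the herdability of $(A,B)$ to that of the much smaller pair $(A_{22}, A_{21})$, and then to exploit the very rigid star topology that the hypothesis forces.

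Under Assumption 2 with $K=1$, every follower is adjacent only to node $1$, so ${\mathcal G}(A)$ is a star centered at the leader. Since $A=A^\top$ is the adjacency matrix of a tree (no self-loops, no cycles), the block decomposition in \eqref{A_leaders} with $m=1$ specializes as follows: $A_{11}=[0]\in\mathbb{R}$, $A_{21}=(a_2,a_3,\dots,a_n)^\top$ collects the weights of the edges between the leader and its followers (all entries nonzero, since each $i\in[2,n]$ has distance $1$ from node $1$), and crucially
$$A_{22}=0_{(n-1)\times(n-1)},$$
because if two followers were adjacent the graph would contain the cycle leader$\to$follower$\to$follower$\to$leader, contradicting the tree structure.

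First I would invoke Proposition \ref{ablocchi} to conclude that $(A,B)$ is herdable if and only if $(A_{22},A_{21})=(0,A_{21})$ is herdable. Next, for this reduced pair the controllability matrix is
$${\mathcal R}(0,A_{21})=\begin{bmatrix} A_{21} & 0\cdot A_{21} & \cdots & 0^{n-2}\,A_{21}\end{bmatrix}=\begin{bmatrix} A_{21} & 0 & \cdots & 0\end{bmatrix},$$
so ${\rm Im}({\mathcal R}(0,A_{21}))=\mathrm{span}\{A_{21}\}$, a one-dimensional subspace of $\mathbb{R}^{n-1}$.

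Finally I would observe that such a subspace contains a strictly positive vector if and only if its generator $A_{21}$ is unisigned. Sufficiency is immediate: if all edges have the same sign, then either $A_{21}\gg 0$ or $-A_{21}\gg 0$, providing a strictly positive element of the image. Necessity is equally immediate: if two weights $a_i, a_j$ have opposite signs, then for any scalar $\lambda$ the vector $\lambda A_{21}$ will have $\lambda a_i$ and $\lambda a_j$ of opposite signs, so no scalar multiple of $A_{21}$ can be strictly positive. Since $A_{21}$ is unisigned exactly when all edges from the leader to its followers have a common sign, this gives the equivalence. There is no real obstacle in this argument; the entire content is the vanishing of $A_{22}$, which collapses ${\rm Im}({\mathcal R})$ down to a single line and makes the star case essentially trivial once Proposition \ref{ablocchi} is available.
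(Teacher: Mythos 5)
Your proof is correct and follows essentially the same route as the paper: both reduce via Proposition \ref{ablocchi} to the pair $({\bf 0}_{(n-1)\times(n-1)}, A_{21})$ and observe that herdability of that pair is equivalent to $A_{21}$ being unisigned. You simply spell out the one-dimensionality of ${\rm Im}({\mathcal R}(0,A_{21}))$ more explicitly than the paper does.
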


\begin{proof}
If all the followers have distance $1$ from the leader, namely $K=1$, then
$$A = \begin{bmatrix} 0 & A_{12}\cr A_{21} & {\bf 0}_{(n-1)\times (n-1)}\end{bmatrix},$$
where $A_{12} = A_{21}^\top \in {\mathbb R}^{1\times (n-1)}$ is  devoid of zero entries.
By   Proposition \ref{ablocchi}, $(A,B)$ is herdable if and only if the pair $({\bf 0}_{(n-1)\times (n-1)}, A_{21})$ is herdable, and this is the case if and only if 
$A_{21}$ is unisigned.
$\square$\end{proof}
\smallskip

\begin{proposition}\label{p3} 
Consider a pair $(A,B)$, with $A\in \mathbb{R}^{n\times n}$ and $B\in {\mathbb R}^n$ satisfying Assumption~2, and suppose that all the followers have distance at most $2$ from the leader, and hence
$$A = \begin{bmatrix} 0 & A_{12} & {\bf 0}_{1\times m_2}\cr A_{21} & {\bf 0}_{m_1\times m_1} & A_{23}\cr
{\bf 0}_{m_2\times 1} & A_{32} & {\bf 0}_{m_2 \times m_2}\end{bmatrix},$$
where $A_{21} = A_{12}^\top\in {\mathbb R}^{m_1}$ and $A_{32}= A_{23}^\top \in {\mathbb R}^{m_2\times m_1}$. 
Then the pair $(A,B)$ is herdable if and only if 
for every $i,j\in {\mathcal F}_1= [2, m_1+1]$ (including $i=j$)\footnote{Note that for $i=j$  condition i) becomes trivial, while condition ii) becomes ``$A_{32} {\bf e}_i$ is unisigned".} such that 
\be
 [A_{23}A_{32}]_{ii}= \sum_{k=1}^{m_2} \left([A_{32}]_{ki}\right)^2 = \sum_{k=1}^{m_2} \left([A_{32}]_{kj}\right)^2 =  [A_{23}A_{32}]_{jj},
\label{noncompatta}
\ee
(namely for every pair $(i,j)\in {\mathcal F}_1\times {\mathcal F}_1$ such that the sum of the squares of the weights of all edges $(i,k), k\in{\mathcal F}_2$, coincides 
with the sum of the squares of the weights of all edges $(j,k), k\in{\mathcal F}_2$) we have:
\begin{itemize}
\item[i)] $[A_{21}]_i \cdot [A_{21}]_j >0$ (namely 
 the two edges from the leader ${\mathcal L}$ to  $i$ and $j$ have the same sign);
 \item[ii)] $A_{32} ({\bf e}_i + {\bf e}_j)$ is either zero or unisigned (namely all edges from $i$ and $j$ to their followers in ${\mathcal F}_2$ have the same sign).
 \end{itemize}
\end{proposition}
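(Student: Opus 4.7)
The plan is to reduce to the follower dynamics via Proposition \ref{ablocchi}, exploit the tree structure to obtain an explicit spectral decomposition of the relevant Krylov iteration, and then translate the existence of a strictly positive vector in the controllability image into the algebraic conditions (i)--(ii).

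\textbf{Reduction and tree-structural observation.} First I would apply Proposition \ref{ablocchi} to reduce herdability of $(A,B)$ to herdability of $(\tilde A_{22}, \tilde A_{21})$, where
$$\tilde A_{22} = \begin{bmatrix} {\bf 0}_{m_1\times m_1} & A_{23}\cr A_{32} & {\bf 0}_{m_2\times m_2}\end{bmatrix}, \qquad
\tilde A_{21} = \begin{bmatrix} A_{21}\cr {\bf 0}_{m_2\times 1}\end{bmatrix}.$$
The crucial tree-structural remark is that, since each vertex in ${\mathcal F}_2$ has a unique parent in ${\mathcal F}_1$, the columns of $A_{32}$ have pairwise disjoint supports. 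Consequently $M := A_{23}A_{32} = A_{32}^\top A_{32}$ is a \emph{diagonal} positive semidefinite matrix with $[M]_{ii} = \sum_k [A_{32}]_{ki}^2$, which is precisely the quantity appearing in~\eqref{noncompatta}.

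\textbf{Splitting of the controllability image.} A direct computation of the block structure of $\tilde A_{22}^k \tilde A_{21}$ shows that even powers contribute only to the top $m_1$ coordinates (giving $M^k A_{21}$) and odd powers only to the bottom $m_2$ coordinates (giving $A_{32} M^k A_{21}$). Hence ${\rm Im}({\mathcal R}(\tilde A_{22},\tilde A_{21}))$ splits as the direct sum
$$ {\mathcal T} \oplus {\mathcal B}, \qquad {\mathcal T} = {\rm span}\{M^k A_{21}\}_{k\ge 0}, \quad {\mathcal B} = {\rm span}\{A_{32} M^k A_{21}\}_{k\ge 0},$$
and $(A,B)$ is herdable iff ${\mathcal T}$ contains a strictly positive vector in ${\mathbb R}^{m_1}$ and ${\mathcal B}$ contains a strictly positive vector in ${\mathbb R}^{m_2}$.

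\textbf{Vandermonde diagonalization.} Let $d_1,\dots,d_p$ be the distinct values taken by the diagonal of $M$, and let $G_1,\dots,G_p$ be the corresponding partition of $[1,m_1]$ (two indices lie in the same $G_r$ iff they satisfy~\eqref{noncompatta}). Writing $g_r := A_{21}|_{G_r}$ (extended by zeros) and $h_r := A_{32} g_r$, the vector $M^k A_{21}$ equals $\sum_r d_r^k g_r$ and $A_{32}M^k A_{21} = \sum_r d_r^k h_r$. A standard Vandermonde argument, using that the $d_r$ are distinct and that enough powers are available, yields ${\mathcal T} = {\rm span}\{g_1,\dots,g_p\}$ and ${\mathcal B} = {\rm span}\{h_1,\dots,h_p\}$. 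Since the $g_r$ have pairwise disjoint supports $G_r$ partitioning $[1,m_1]$, and the $h_r$ have pairwise disjoint supports (grandchildren of disjoint parents), the existence of a strictly positive vector in each subspace is equivalent to each $g_r$ and each $h_r$ being unisigned.

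\textbf{Translating to conditions (i)--(ii).} Noting that $[A_{21}]_i\neq 0$ for every $i\in{\mathcal F}_1$ (otherwise ${\mathcal R}(A,B)$ would have a zero row), $g_r$ unisigned for every $r$ is precisely condition~(i). Given~(i), the $\ell$-th entry of $h_r$ (for $\ell$ a grandchild whose parent $\pi(\ell)\in G_r$) equals $[A_{21}]_{\pi(\ell)}[A_{32}]_{\ell,\pi(\ell)}$, and since the sign of $[A_{21}]_{\pi(\ell)}$ is constant on $G_r$ we may factor it out, reducing $h_r$ unisigned to the requirement that $[A_{32}]_{\ell,\pi(\ell)}$ be of constant sign as $\ell$ ranges over grandchildren of $G_r$. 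Because the columns $A_{32}{\bf e}_i$, $i\in G_r$, have disjoint supports, this is exactly the statement that $A_{32}({\bf e}_i+{\bf e}_j)$ is zero or unisigned for every $i,j\in G_r$, i.e.\ condition~(ii) (the case $i=j$ forcing each individual column $A_{32}{\bf e}_i$ to be unisigned). The main obstacle I expect is carrying out the Vandermonde step cleanly enough that the multiplicities of the diagonal entries of $M$ are correctly tracked; once the subspaces ${\mathcal T}$ and ${\mathcal B}$ are identified, the translation into~(i)--(ii) is essentially bookkeeping.
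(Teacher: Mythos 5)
Your proposal is correct and follows essentially the same route as the paper's proof: reduction via Proposition \ref{ablocchi}, the observation that $A_{23}A_{32}$ is diagonal because the columns of $A_{32}$ have disjoint supports, the split of the controllability image into the even-power (top) and odd-power (bottom) parts, and a Vandermonde factorization grouping indices by equal diagonal entries. The only difference is cosmetic: you inline the Vandermonde argument for both blocks at once, whereas the paper delegates the top block to Lemma \ref{vandermonde} and reuses that lemma's factorization \eqref{lambdagamma} for the bottom block.
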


\begin{proof}
 First of all, we highlight that, by Assumption 2, $\Gamma := A_{21}$ is devoid of zero entries, 
and for every $i\in [1,m_2]$ the row vector ${\bf e}_i^\top A_{32}$ is a monomial vector (namely it has a single nonzero entry). Consequently, $\Lambda:= A_{23}A_{32}$ is a diagonal matrix (with nonnegative diagonal entries).

\noindent By   Proposition \ref{ablocchi}, $(A,B)$ is herdable if and only if the pair 
$$\left(\begin{bmatrix} {\bf 0}_{m_1\times m_1} & A_{23}\cr
A_{32} & {\bf 0}_{m_2\times m_2}\end{bmatrix}, \begin{bmatrix}A_{21}\cr {\bf 0}_{m_2}\end{bmatrix}\right)$$ 
is herdable, and this is the case if and only if the image of 
the controllability matrix  $\hat {\mathcal R}$ of the previous pair, given in \eqref{reachhat}
\begin{figure*}
\be
\hat {\mathcal R} :=
\begin{bmatrix}
A_{21} & 0 & (A_{23} A_{32}) A_{21} & 0 & (A_{23} A_{32})^2 A_{21} & 0 & \dots \cr
0 & A_{32} A_{21} & 0 & A_{32} (A_{23} A_{32}) A_{21} & 0 & A_{32} (A_{23} A_{32})^2 A_{21}  & \dots
\end{bmatrix}
\label{reachhat}
\ee
 \begin{center}
-------------------------------------------------------------------------------------------------------------------------------------------------
\end{center}
\end{figure*}
\noindent includes a strictly positive vector.
This is the case if and only if the following two conditions simultaneously hold:
\begin{itemize}
\item[a)] the image of the controllability matrix
${\mathcal R}_1 :=
\begin{bmatrix}
A_{21} &   (A_{23} A_{32}) A_{21}   & (A_{23} A_{32})^2 A_{21}   & \dots \end{bmatrix}$ includes a strictly positive vector, namely the pair 
$(A_{23} A_{32}, A_{21})$ is herdable;
\item[b)] 
the image of the matrix
 $A_{32}  {\mathcal R}_1$
includes a strictly positive vector.
\end{itemize}
As the matrix $\Lambda = A_{23} A_{32}$ 
 is  diagonal, while the column vector $\Gamma = A_{21}$ has no zero entries,
 by Lemma
\ref{vandermonde}, the pair $(\Lambda,\Gamma)=(A_{23} A_{32}, A_{21})$ is herdable if and only if 
condition
 \eqref{noncompatta}
 implies $[A_{21}]_i \cdot [A_{21}]_j >0$. This means that a) is equivalent to condition i).
\\
 So, we are now remained with proving that if i) (equivalently, a)) holds, then b) and ii) are equivalent.
If i) holds,  by referring to the proof of Lemma \ref{vandermonde}, we can assume without loss of generality that $\Lambda$ and $\Gamma$ take the form given in \eqref{lambdagamma} and 
 claim that 
 $${\rm Im} \left(A_{32}  {\mathcal R}_1\right) =
 {\rm Im} \left(A_{32} \cdot {\rm diag}
\{{\bm \gamma}_1, \dots,  {\bm \gamma}_p, {\bm \gamma}_{p+1}, \dots, {\bm \gamma}_s\}\right),$$
where ${\bm \gamma}_i\in {\mathbb R}^{n_i}$ is strictly positive if $i\in [1,p]$ and strictly negative if $i\in [p+1,s]$.\\
Set
$W = 
\begin{bmatrix}
 {\bf w}_1 \,| \, \dots \,|\, {\bf w}_p  \,|\, {\bf w}_{p+1} \,|\, \dots  \,|\, {\bf w}_s
\end{bmatrix} := A_{32} \cdot {\rm diag}
\{{\bm \gamma}_1, \dots,  {\bm \gamma}_p, {\bm \gamma}_{p+1}, \dots, {\bm \gamma}_s\},
$
where
each vector ${\bf w}_i$ is obtained 
by combining with the coefficients  of the vector ${\bm \gamma}_i$ (having all the same sign) the columns of $A_{32}$ of indices
$[h_i+1, h_i+n_i]$, where by definition $h_1:=0$, while
 $h_i := n_1 +n_2 +\dots + n_{i-1}$ for $i\in [2,s]$.\\
 We observe that  
 all  columns of $A_{32}$ are either zero (if a vertex in ${\mathcal F}_1= [2, m_1+1]$ has no followers) or have disjoint nonzero patterns, meaning that for every $\ell, m\in [h_i+1, h_i+n_i], \ell\ne m,$
 $\overline{\rm ZP}(A_{32} {\bf e}_\ell) \cap \overline{\rm ZP}(A_{32} {\bf e}_m) =\emptyset$.
 As a result also the columns ${\bf w}_i$ of $W$ are either zero or have disjoint nonzero patterns. \\
We can now conclude that condition b) holds if and only if   ${\rm Im} \left(A_{32}  {\mathcal R}_1\right) =
 {\rm Im} (W)$  contains a strictly positive 
 vector, but this is possible if and only if all vectors ${\bf w}_h$ are unisigned.
 By the way the vectors ${\bf w}_h$ have been obtained, this is possible if and only if condition ii) holds.
$\square$\end{proof}
\medskip

\section{conclusions} \label{s5}
In this paper herdability  of linear time-invariant systems has been investigated.  Special attention has been given to pairs $(A,B)$ 
corresponding to  leader-follower networks ${\mathcal G}(A)$, and to networks with tree topologies and a single leader.
For this latter case, an algorithm for leader selection is provided, and  when the  distance from the leader to  the followers is  at most $2$, necessary and sufficient conditions for herdability are stated. Future research will focus on the study of herdability for networked systems interacting through more general topological structures.
\medskip

\section*{Technical lemma}

 \begin{lemma} \label{vandermonde}
 Given a matrix pair $(\Lambda, \Gamma)$, with $\Lambda = {\rm diag}\{\lambda_1, \lambda_2, \dots, \lambda_n\} \in {\mathbb R}^{n\times n}$ a diagonal matrix,
  and $\Gamma\in {\mathbb R}^{n}$ devoid of zero entries, the pair is herdable if and only 
  $\lambda_i=\lambda_j$ 
  implies $[\Gamma]_i\cdot [\Gamma]_j > 0$, namely the $i$-th and the $j$-th entries of $\Gamma$ have the same sign.
  \end{lemma}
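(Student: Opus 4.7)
The plan is to exploit the diagonal structure of $\Lambda$ and reduce the herdability condition to a polynomial interpolation problem on the distinct diagonal entries of $\Lambda$. After a permutation of the coordinates, group together the indices corresponding to equal values of the $\lambda_i$'s, so that, up to similarity,
\[
\Lambda = {\rm diag}\{\mu_1 I_{n_1}, \mu_2 I_{n_2}, \dots, \mu_s I_{n_s}\},
\qquad \Gamma = \begin{bmatrix}\bfgamma_1 \cr \bfgamma_2 \cr \vdots \cr \bfgamma_s\end{bmatrix},
\]
where $\mu_1,\dots,\mu_s$ are the distinct values of $\lambda_1,\dots,\lambda_n$, $n_1+\dots+n_s=n$, and each $\bfgamma_i \in {\mathbb R}^{n_i}$ is devoid of zero entries.

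The first step is the simple observation that $\Lambda^k \Gamma$ inherits the block structure of $\Gamma$, with $i$-th block $\mu_i^k \bfgamma_i$. Consequently, the controllability matrix ${\mathcal R}(\Lambda,\Gamma) = [\Gamma\ \Lambda\Gamma\ \dots\ \Lambda^{n-1}\Gamma]$ has its $i$-th block of rows equal to the rank-one matrix $\bfgamma_i \,[1\ \mu_i\ \mu_i^2\ \dots\ \mu_i^{n-1}]$. Hence for any ${\bf u}\in{\mathbb R}^n$, the $i$-th block of ${\mathcal R}(\Lambda,\Gamma){\bf u}$ is $p(\mu_i)\,\bfgamma_i$, where $p(x):=[{\bf u}]_1 + [{\bf u}]_2\,x+\dots + [{\bf u}]_n\,x^{n-1}$ is a polynomial of degree at most $n-1$.

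For necessity, suppose there exist indices $i,j$ in the same block (so that they correspond to the same $\mu_k$) with $[\Gamma]_i\cdot[\Gamma]_j<0$; then $\bfgamma_k$ has entries of opposite signs, and no scalar multiple $p(\mu_k)\bfgamma_k$ can be strictly positive. Thus ${\mathcal R}(\Lambda,\Gamma){\bf u}\gg 0$ is impossible. For sufficiency, assume each $\bfgamma_i$ is unisigned with common sign $\sigma_i\in\{+1,-1\}$. Since the $\mu_1,\dots,\mu_s$ are pairwise distinct, by Lagrange interpolation there exists a polynomial $p$ of degree at most $s-1\le n-1$ satisfying $p(\mu_i)=\sigma_i$ for all $i\in[1,s]$; choosing ${\bf u}$ to be the coefficient vector of $p$ (padded with zeros) gives $p(\mu_i)\bfgamma_i\gg 0$ for every $i$, i.e.\ ${\mathcal R}(\Lambda,\Gamma){\bf u}\gg 0$, so $(\Lambda,\Gamma)$ is herdable.

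The only nontrivial ingredient is the Lagrange interpolation step, which accounts for the lemma's name — the Vandermonde matrix at distinct nodes $\mu_1,\dots,\mu_s$ is invertible, so any prescribed values at those points are attainable by a polynomial of degree at most $s-1$. Everything else is the bookkeeping of permuting into block form and noticing the rank-one structure of each block of rows of the controllability matrix, so I do not anticipate a serious obstacle.
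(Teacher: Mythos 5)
Your proof is correct and follows essentially the same route as the paper's: group the coordinates by equal diagonal entries of $\Lambda$, observe that the controllability matrix factors through a Vandermonde matrix at the distinct values (your Lagrange interpolation step is exactly the paper's full-row-rank Vandermonde factorization), and conclude that herdability is equivalent to each block of $\Gamma$ being unisigned. The only cosmetic difference is in the necessity direction, where the paper exhibits an explicit unisigned left annihilator $[\Gamma]_j {\bf e}_i^\top - [\Gamma]_i {\bf e}_j^\top$ of the controllability matrix, whereas you read the obstruction directly off the rank-one block structure; both arguments are sound.
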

  
  \begin{proof} We first prove that if the pair $(\Lambda, \Gamma)$ is herdable, then $\lambda_i=\lambda_j$ 
  implies $[\Gamma]_i\cdot [\Gamma]_j > 0$.
  Suppose, by contradiction, that $\lambda_i=\lambda_j=: \lambda$ and  $[\Gamma]_i\cdot [\Gamma]_j < 0$.
  Then it is easy to see that $i\ne j$ and the vector
  ${\bf w}^\top := [\Gamma]_j {\bf e}_i^\top - [\Gamma]_i {\bf e}_j^\top$ 
  satisfies 
  ${\bf w}^\top \Lambda = \lambda {\bf w}^\top,$
  namely ${\bf w}$ is a (left) eigenvector of $\Lambda$ corresponding to $\lambda$,
  and ${\bf w}^\top \Gamma=0$. Consequently, it is immediate to prove that  ${\bf w}$ is orthogonal to ${\rm Im}(\mathcal{R}(\Lambda, \Gamma))$, i.e. 
  ${\bf w}^\top {\mathcal R}(\Lambda, \Gamma)={\bf 0}_n^\top.$
  Since ${\bf w}^T$  is unisigned (since $[\Gamma]_j$ and $- [\Gamma]_i$ have the same sign), it is impossible that there exists a strictly positive vector
  ${\bf v}\in {\rm Im} ({\mathcal R}(\Lambda, \Gamma))$, since this would imply ${\bf w}^\top {\bf v} \ne 0$. Therefore
  the pair $(\Lambda, \Gamma)$ cannot be herdable.
  \smallskip
  
  We now prove that if $\lambda_i=\lambda_j$ 
  implies $[\Gamma]_i\cdot [\Gamma]_j > 0$, then the pair $(\Lambda, \Gamma)$ is herdable.
  \\
 If all entries of $\Gamma$ have the same sign, the pair  $(\Lambda, \Gamma)$ is trivially herdable. So, suppose this is not the case.
  It entails no loss of generality to first permute the entries of $\Gamma$ (and accordingly the rows and columns of $\Lambda$)
  so that the first ones are positive and the last ones are negative. Then we can permute the entries in such a way that
 the identical diagonal entries of $\Lambda$ are consecutive.
 This implies that, under the previous assumptions,  $\Lambda$ and $\Gamma$ take the following form
\be
\!\!{\small \Lambda =
 \begin{bmatrix}
 \Lambda_1 & & &\vline& & & \cr
 &\ddots & &\vline&&&\cr
 & & \Lambda_p&\vline &&&\cr
 \hline
 & & & \vline &\Lambda_{p+1} & & \cr
  & & & \vline& &\ddots& \cr
 & & & \vline& && \Lambda_s 
 \end{bmatrix}}
 \
 {\small\Gamma = \begin{bmatrix}
 {\bm \gamma}_1\cr \vdots\cr {\bm \gamma}_p\cr \hline
 {\bm \gamma}_{p+1}\cr \vdots\cr {\bm \gamma}_s
\end{bmatrix}\!\!}
\label{lambdagamma}
\ee

\normalsize

\noindent   where each $\Lambda_i$ is a scalar matrix of size say $n_i$, namely $\Lambda_i=\bar \lambda_i I_{n_i}$ with $\bar \lambda_i\in \{\lambda_1, \dots, \lambda_n\}$,
while ${\bm \gamma}_i\in {\mathbb R}^{n_i}$ is a strictly positive vector for every $i\in [1,p]$ and a strictly negative vector for every $i\in [p+1,s]$. 
Moreover, by the assumption that $\lambda_i=\lambda_j$ 
  implies $[\Gamma]_i\cdot [\Gamma]_j > 0$ 
   we can claim that $\bar \lambda_h\ne \bar \lambda_k$ for $h\ne k$.
It is immediate to see that the controllability matrix of the pair $(\Lambda, \Gamma)$ factorizes as in \eqref{factor}.
\be
\small{ {\mathcal R}(\Lambda, \Gamma)\!=\! 
\begin{bmatrix}
{\bm \gamma}_1 & & &\!\!\!\!\!\!\vline\!\!\!\!\!\!& & & \cr
 &\!\!\!\!\!\! \ddots \!\!\!\!\!\!  & &\!\!\!\!\!\!\vline\!\!\!\!\!\!&&&\cr
 & & {\bm \gamma}_p&\!\!\!\!\!\!\vline\!\!\!\!\!\!&&&\cr
 \hline
 & & &\!\!\!\!\!\!\vline\!\!\!\!\!\!& {\bm \gamma}_{p+1} & & \cr
  & & &\!\!\!\!\!\!\vline\!\!\!\!\!\!&\!\!\!\!\!\!  &\!\!\!\!\!\!  \ddots\!\!\!\!\!\! & \cr
 & & &\!\!\!\!\!\!\vline\!\!\!\!\!\!& &&  {\bm \gamma}_s 
 \end{bmatrix}
 \begin{bmatrix}
 1\!\!\!\!\!\!  & \bar \lambda_1\!\!\!\!\!\! & \dots &\!\!\!\!\!\! \bar\lambda_1^{n-1}\cr
\vdots\!\!\!\!\!\! &\vdots\!\!\!\!\!\! &\dots &\!\!\!\!\!\!\vdots\cr
 1 \!\!\!\!\!\! & \bar\lambda_p\!\!\!\!\!\! & \dots & \!\!\!\!\!\!\bar\lambda_p^{n-1}\cr
  1\!\!\!\!\!\!  & \bar\lambda_{p+1}\!\!\!\!\!\! & \dots &\!\!\!\!\!\! \bar\lambda_{p+1}^{n-1}\cr
\vdots\!\!\!\!\!\! &\vdots\!\!\!\!\!\! &\dots &\!\!\!\!\!\!\vdots\cr
 1\!\!\!\!\!\!  & \bar\lambda_s\!\!\!\!\!\! & \dots &\!\!\!\!\!\! \bar\lambda_s^{n-1}
 \end{bmatrix}\!\!
\label{factor}}
\ee


\noindent Since $\bar\lambda_1, \dots, \bar\lambda_s$ are all distinct, the Vandermonde matrix on the right of \eqref{factor} is of full row rank. This ensures that
$${\rm Im}  ({\mathcal R}(\Lambda, \Gamma)) =
{\rm Im}\left(\begin{bmatrix}
{\bm \gamma}_1 & & &\vline& & & \cr
 &\ddots & &\vline&&&\cr
 & & {\bm \gamma}_p&\vline &&&\cr
 \hline
 & & & \vline & {\bm \gamma}_{p+1} & & \cr
  & & & \vline& &\ddots& \cr
 & & & \vline& && {\bm \gamma}_s 
 \end{bmatrix}\right)$$
 and since all columns of this latter matrix are unisigned, it is immediate to see
 that there exists a strictly positive vector in its image, and hence
the pair $(\Lambda, \Gamma)$ is herdable.  $\square$
\end{proof}

\bibliographystyle{plain} 

 \bibliography{Refer162}

\end{document}